\theoremstyle{plain}
\newtheorem{theorem}{Theorem}[section]
\newtheorem{cor}[theorem]{Corollary}
\newtheorem{prop}[theorem]{Proposition}
\newtheorem{lemma}[theorem]{Lemma}
\newtheorem*{teoA}{Theorem A}
\newtheorem*{teoB}{Theorem B}
\theoremstyle{definition}
\newtheorem{remark}[theorem]{Remark}
\newtheorem{fact}[theorem]{Fact}
\newtheorem{definition}[theorem]{Definition}
\newtheorem{example}[theorem]{Example}
\newtheorem*{ackno}{Acknowledgements}
\newtheorem*{claimstar}{Claim}
\newenvironment{claimstarpf}{\noindent\textit{Proof of
		Claim.}}{\hfill\qedsymbol \tiny{Claim}
	\medskip}
\newcommand{\nc}{\newcommand}
\nc{\Z}{\mathbb{Z}}
\nc{\Q}{\mathbb{Q}}
\nc{\N}{\mathbb{N}}
\nc{\F}{\mathbb{F}}
\nc{\UU}{\mathbb{U}}
\nc{\C}{\mathbb{C}}
\nc{\M}{\mathcal{M}}
\nc\LL{\mathcal L}
\nc{\dcl}{\operatorname{dcl}}
\nc{\dclq}{\operatorname{acl^\text{eq}}}
\nc{\acl}{\operatorname{acl}}
\nc{\aclq}{\operatorname{acl^\text{eq}}}
\nc{\nf}[1]{_{\mid {#1}}}
\nc{\restr}[1]{\!\!\upharpoonright_{#1}}
\nc{\sbgp}[1]{\langle\xspace {#1}\xspace\rangle}
\nc{\stab}{\operatorname{Stab}}
\nc\CAN{\operatorname{CB}}
\nc\inv{ ^{-1}}
\nc{\tp}{\operatorname{tp}}
\nc\cb{\operatorname{Cb}}
\nc\U{\operatorname{U}}
\nc{\cf}{\text{cf. }}
\nc{\eg}{\text{e.g. }}
\def\Ind#1#2{#1\setbox0=\hbox{$#1x$}\kern\wd0\hbox to
  0pt{\hss$#1\mid$\hss} \lower.9\ht0\hbox to
  0pt{\hss$#1\smile$\hss}\kern\wd0}
\def\Notind#1#2{#1\setbox0=\hbox{$#1x$}\kern\wd0\hbox to
  0pt{\mathchardef\nn="0236\hss$#1\nn$\kern1.4\wd0\hss}\hbox to
  0pt{\hss$#1\mid$\hss}\lower.9\ht0 \hbox to
  0pt{\hss$#1\smile$\hss}\kern\wd0}
\def\indip{\mathop{\ \ \hbox to 0pt{\hss$\mid^{\hbox to
0pt{$\scriptstyle P$\hss}}$\hss}
\lower4pt\hbox to 0pt{\hss$\smile$\hss}\ \ }}
\def\nindip{\mathop{\ \ \hbox to 0pt{\hss$\!\not{\mid}^{\hbox to
0pt{$\scriptstyle\, P$\hss}}$\hss}
\lower4pt\hbox to 0pt{\hss$\smile$\hss}\ \ }}
\begin{document}

\title{A model-theoretic note on the Freiman-Ruzsa theorem}
\date{\today}

\author{Amador Martin-Pizarro, Daniel Palacin and Julia Wolf}
\address{Abteilung f\"ur Mathematische Logik, Mathematisches Institut,
  Albert-Ludwig-Universit\"at Freiburg, Ernst-Zermelo-Stra\ss e 1, D-79104
  Freiburg, Germany}
\address{Department of Pure Mathematics and Mathematical Statistics, Centre for
Mathematical Sciences, Wilberforce Road, Cambridge CB3 0WB, United Kingdom}

\email{pizarro@math.uni-freiburg.de}
\email{palacin@math.uni-freiburg.de}
\email{julia.wolf@dpmms.cam.ac.uk}
\thanks{The first two authors conducted research partially supported by MTM2017-86777-P as well as by the Deutsche
	Forschungsgemeinschaft (DFG, German Research Foundation) - 
	Project number 2100310201 and 2100310301, part of the ANR-DFG 
	program GeoMod}
\keywords{Model Theory, Local Stability, Additive Combinatorics, Freiman-Ruzsa}
\subjclass{03C13, 03C45, 11B30}

\begin{abstract}
 A non-quantitative version of the Freiman-Ruzsa theorem is obtained for finite stable sets
 with small tripling in arbitrary groups, as well as for (finite) weakly normal subsets in
 abelian groups.
 \end{abstract}

\maketitle

\section*{Introduction}

A finite subset $A$ of a group $G$ is said to have \emph{doubling $K$} if the product set $A\cdot A=\{a\cdot b\,|\, a, b \in A\}$ has
size at most $K|A|$. Archetypal examples of sets with
small
doubling (where $K$ is constant as the size of the group $G$, and the set $A$, tend to infinity) are cosets of
subgroups. Theorems of Freiman-Ruzsa type assert that sets with small doubling are ``not too far'' from being subgroups in a suitable sense. Specifically, Freiman's original theorem \cite{gF73} asserts that a finite subset of
the integers with small doubling is efficiently contained in a generalized arithmetic progression. A proof of an analogous statement for arbitrary abelian groups was given by Green and Ruzsa \cite{GR07}, based on Ruzsa's proof of Freiman's theorem \cite{iR94}. A version of the result for abelian groups of bounded exponent with a particularly pleasing proof was given by Ruzsa in \cite{iR99}. His result asserts that if $A$ is a finite subset of an abelian group $(G,+)$ of
exponent $r$ such that $|A+A|\leq K|A|$, then $A$ is contained in a subgroup
$H$ of $G$ of size at most $K^2 r^{K^4}|A|$. For $G=\mathbb{F}_p^n$ with $p$ a fixed
prime, the exponent can be improved to $2K-1$ (see \cite{EL14} and references
therein). By considering the union of a subspace and $K$ arbitrary
linearly independent elements, it is not difficult to see that any bound on the
size of a subgroup containing $A$ must be exponential in $K$.

However, this example is still highly structured in the sense that a large part of the set has the structure of a subgroup, which suggests a natural reformulation of the problem. The \emph{Polynomial Freiman-Ruzsa Conjecture}, which remains one of the central open problems in additive combinatorics, asserts that a subset $A$ of doubling $K$ in $\mathbb
F_2^\infty$ can be covered by $C_1(K)$ many cosets of some subspace of size
$C_2(K)|A|$, where both $C_1(K)$ and $C_2(K)$ are polynomials in $K$; or equivalently, that there are constants
$C_3(K)$ and $C_4(K)$, each polynomial in $K$, such that for some  coset $v+H$ of a subspace $H$ of size
$C_3(K)|A|$, we have that $A\cap (v+H)$ has size at least
$|A|/C_4(K)$. For the best bounds known to date see \cite{tS12,tSch11}.

The above formulation of the Freiman-Ruzsa theorem resonates with a classical
setting in model theory, namely weakly normal groups. Weakly normal groups, also known as $1$-based
stable groups, are groups for which every definable set is a boolean combination of instances of weakly normal
formulae (see Section \ref{S:wn}). In a weakly normal
(stable) group, every definable subset is a boolean combination of cosets of definable subgroups \cite{HP87}.
Furthermore, every type over a model is the generic type of a coset of a
(type-)definable subgroup: the subgroup is its model-theoretic stabiliser.
Roughly speaking, a large proportion of a given definable set intersects a
coset of a definable group, so they are \emph{commensurable}.

For non-abelian groups, the suitable notion of doubling is \emph{tripling} $K$, that is, the cardinality of $A\cdot A \cdot A$ is bounded by $K|A|$. Indeed, sets of small tripling have small doubling, but the converse need not hold.  In this context, phenomena of Freiman-Ruzsa type are present in recent work of Hrushovski
\cite[Corollary 4.18]{eH12}, who
showed that a set of small tripling in a (possibly infinite) group of bounded
exponent is commensurable  with a subgroup, inspired by classical results and
techniques from stability theory in a non-standard setting.

Motivated by Hrushovski's work, in this note we adapt the local approach to stability of Hrushovski and
 Pillay in \cite[Theorem 4.1]{HP94} in order to
obtain a non-quantitative version of the Freiman-Ruzsa theorem for arbitrary
(possibly infinite) groups under the assumption of stability. We say that
a subset $A$ of $G$ is \emph{$r$-stable} if there are no elements
$a_1,\ldots, a_r, b_1, \ldots, b_r$ in $G$ such that $b_j\cdot a_i$ belongs to
$A$ if and only if  $i\leq j$. In particular, we
prove the following result.

\begin{teoA}\label{thm:teoA}
Given real numbers $K\geq 1$ and $\epsilon>0$ and a natural number $r\geq 
2$, 
there
exists a natural number $n=n(K, \epsilon, r)$ such that
for any (possibly infinite) group $G$ and any finite $r$-stable subset 
$A\subseteq G$ with tripling $K$, there is a
subgroup $H\subseteq A\cdot A\inv$
of $G$ with $A\subseteq C\cdot H$ for some $C\subseteq A$ of size at most $n$. Moreover, there exists $C'\subseteq C$ such that 
\[ |A \triangle (C'\cdot H)|\leq \epsilon |H|.\]
\noindent In particular, it follows from the Pl\"unnecke-Ruzsa inequalities that 
\[ |A \triangle (C'\cdot H)|\leq \epsilon K^2 |A|.\]
\end{teoA}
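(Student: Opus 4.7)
The plan is to argue by compactness/ultraproducts, reducing Theorem~A to a model-theoretic statement about a pseudofinite stable set with small tripling in a saturated group, and then to apply the Hrushovski--Pillay local analysis of stabilisers. Concretely, suppose the conclusion fails for some fixed $K$, $\epsilon$ and $r$. Then for each $n$ there exists a counterexample $(G_n, A_n)$, and I would take a non-principal ultraproduct $(G^*, A^*)$ in the language of groups expanded by a predicate for $A$. The set $A^*$ is internal, hence carries a $\{0,1\}$-valued Loeb-type normalised counting measure $\mu$ with $\mu(A^*)=1$ and $\mu(A^*\cdot A^*\cdot A^*)\le K$, and the hypothesis that $A$ is $r$-stable transfers to a genuine stability statement about the formula $\varphi(x,y)\colon\; y\cdot x\in A^*$. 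This places us in the local stability framework on which the paper is based.

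Next I would construct the subgroup $H$ as the (left) stabiliser of a suitable generic type. Following Hrushovski--Pillay, one chooses a $\varphi$-type $p$ over a small model, extending "$x\in A^*$", that is generic in the sense that $\mu(p)$ is maximal; its left stabiliser
\[
H := \bigl\{\, g\in G^* \;\big|\; g\cdot p = p \,\bigr\}
\]
is a type-definable subgroup. Stability together with the bound on $\mu(A^*\cdot A^*\cdot A^*)$ forces $H$ to be contained in $A^*\cdot (A^*)^{-1}$ and to have bounded index in a definable group. By the usual local stability argument (intersection of stabilisers of finitely many generics), $H$ is in fact \emph{definable}. Because $A^*$ meets only boundedly many cosets of $H$ with positive measure, compactness yields a finite set $C\subseteq A^*$ of bounded size with $A^*\subseteq C\cdot H$; the bound on $|C|$ is the $n=n(K,\epsilon,r)$ we need.

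For the approximation clause, genericity of $p$ implies that on each coset $c\cdot H$ meeting $A^*$ in positive measure, the symmetric difference $A^*\triangle (c\cdot H)$ has measure strictly less than any prescribed $\delta>0$, provided $c$ is chosen to realise a generic type; collecting the "good" cosets into $C'\subseteq C$ and discarding the negligible ones gives $\mu\bigl(A^*\triangle (C'\cdot H)\bigr)\le \epsilon$, and hence $|A^*\triangle(C'\cdot H)|\le\epsilon|H|$ in the pseudofinite counting. Transferring back via \L o\'s's theorem contradicts our standing assumption, and the final inequality $|A\triangle(C'\cdot H)|\le\epsilon K^2|A|$ is immediate from the Pl\"unnecke--Ruzsa bound $|H|\le|A\cdot A^{-1}|\le K^2|A|$.

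The main technical obstacle I expect is the step showing that the type-definable stabiliser $H$ is actually definable and, crucially, that it is contained in $A\cdot A^{-1}$ rather than in some larger product set. The former requires exploiting stability to replace an infinite intersection of $\varphi$-definable sets by a finite one (this is where the $r$-stability hypothesis does real work, bounding the complexity of $\varphi$), while the latter depends on a Ruzsa-style calculation inside the pseudofinite framework, converting the tripling bound into a statement about the measure of the relevant stabiliser. Extracting a uniform $n(K,\epsilon,r)$ (rather than one depending on the ambient group) is automatic from the ultraproduct reduction, which is why no explicit bounds can be expected from this argument.
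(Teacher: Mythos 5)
Your overall strategy is the paper's strategy: take a non-principal ultraproduct of counterexamples, obtain a definable normalised counting measure $\mu$, use $r$-stability of $\varphi(x,y)=A(y\cdot x)$ (transferred by \L o\'s) together with the local stability machinery of Hrushovski--Pillay to produce a wide definable subgroup $H\subseteq A\cdot A\inv$, cover $A$ by finitely many translates of $H$ (Ruzsa covering argument), and transfer back. But there are two places where the argument as written has genuine gaps.

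First, your route to a \emph{definable} $H$ is not how the paper gets there. You define $H$ as the type-definable stabiliser of a generic type and then want to argue that it is in fact definable by an ``intersection of stabilisers of finitely many generics''. The paper simply takes $H=\stab_\varphi(q)$ for the local $\varphi$-type $q=p\restr\varphi$, which is definable by construction; the substantive work is showing $H$ is wide, which is done by proving (Lemma \ref{L:st_wide}) that the set $\mathrm{st}(p)=\{g: g\cdot p\cup p \text{ is wide}\}$ contains a wide type and is itself contained in $H$ (Proposition \ref{P:stable_acceptable}). This is a cleaner route, and your phrase about ``intersection of stabilisers of finitely many generics'' does not obviously produce a group that sits inside $A\cdot A^{-1}$ or one for which the covering lemma applies.

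Second, and more seriously, the approximation clause is the heart of the improvement from $\epsilon|A|$ to $\epsilon|H|$, and your argument for it is a handwave. You assert that on a generic coset $c\cdot H$ the symmetric difference with $A$ has measure $<\delta$ for any $\delta$, ``by genericity of $p$'', but genericity of one type does not control $(c\cdot H)\setminus A$. The crucial missing ingredients are: (i) one must replace $H$ by the $\varphi$-connected component $H^0_\varphi$ (Fact \ref{F:HrPi}), at the cost of possibly enlarging $C$; and (ii) one then invokes the dichotomy that on each coset of $H^0_\varphi$ and each $\varphi$-definable $X$, either $X$ or its complement is \emph{generic}, combined with the fact that the $\mu(H)$-normalised measure on $H^0_\varphi$ is the \emph{unique} left-invariant Keisler measure, so generic $=$ wide. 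This forces $\mu\big((c\cdot H)\setminus A\big)=0$ exactly for every $c\in C'$, which is what makes the inequality $|A\triangle(C'\cdot H)|<\epsilon|H|$ hold for arbitrary $\epsilon>0$ after transferring by \L o\'s. Without identifying $H$ as $H^0_\varphi$ and invoking this dichotomy plus uniqueness of the invariant measure, you cannot conclude the measure is $0$ (or even small), and the transferred inequality is not available. A minor further imprecision: your measure should be normalised by $|B\cdot B\cdot B|$ with $B=A\cup A\inv\cup\{\mathrm{id}\}$ rather than by $|A|$ (so $\mu(A)\ge(14K^3)^{-1}$ rather than $\mu(A^*)=1$), and it is real-valued, not $\{0,1\}$-valued; translation invariance on the relevant boolean algebra is what makes $A$ acceptable in the sense of Definition \ref{D:acceptable}.
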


In the case when $G$ is abelian, it suffices to assume that $A$ has doubling
	$K$. Furthermore, we shall prove that, when $G$ is abelian, the subgroup
	$H$ can be taken to be a boolean
	combination (of complexity only depending on $K$, $\epsilon$ and $r$) of 
	translates of $A$. 
	
In particular, on choosing $\epsilon=1$, Theorem A implies that there is some 
natural number 
$n_0=n(K, 1, r)$ such that any finite $r$-stable subset $A$ of tripling $K$ is 
contained in $n_0$ translates of a subgroup $H\subseteq A\cdot A\inv$. It follows that $|A\cap g\cdot H|\geq 
|A|/n_0$ 
for some subgroup $H\leqslant G$ and some $g\in A$, which is a qualitative 
result in the spirit of Freiman-Ruzsa. As in the previous paragraph, when $G$ 
is abelian, the complexity of such a subgroup $H$ as a boolean combination of 
translates of $A$ can be bounded solely in terms of $K$ and $r$.

The above result dovetails with a suite of 
arithmetic regularity lemmas under the additional assumption of stability that 
have been obtained recently by Terry and the third author \cite{TW18, TW19}, as 
well as by Conant, Pillay and Terry \cite{CPT19}. However, without the 
assumption of small doubling/tripling, the bound on the symmetric difference is 
at best $\epsilon |H|$. Furthermore, the group $H$ so obtained in \cite{TW18, TW19,CPT19} has finite index in $G$ so its size comparable to $|G|$, but not necessarily to $|A|$.  The Theorem A is also reminiscent of work of Sisask \cite[Theorem 
5.4]{oS18}, who combined the assumption of small doubling with that of bounded 
VC-dimension in vector spaces over finite fields. Finally, we remark that closely related results were obtained by 
Conant \cite[Corollary 1.4]{gC20} for groups of bounded exponent. In a previous version of this article, Theorem A was stated with the upper bound  \[ |A \triangle (C'\cdot H)|\leq \epsilon |A|,\] which was subsequently improved by Conant \cite[Theorem 1.6]{gC21} to the current upper bound. Conant's methods do not use the full power of stability but instead work in the more general setting of finite VC-dimension. We later noticed that our (non-standard) techniques already implied the finer bound in terms of $H$. 

We also explore the interaction between model theory and recent work in
additive combinatorics in a second direction. In \cite{GS08} Green and Sanders
showed that subsets of a locally compact abelian group $G$ which are elements
of the Fourier algebra $\mathcal{A}(G)$ belong to the coset ring
$\mathcal{W}(G)$. They also gave an upper bound for the boolean complexity of
the representation as elements in
$\mathcal{W}(G)$ of such sets in terms of their Wiener norm. More recently,
Sanders \cite{tS18} showed that smallness of this norm implies stability, hence
$\mathcal{A}(G)=\mathcal{W}(G)\subseteq \mathcal{S}(G)$, where $\mathcal{S}(G)$
denotes the ring of stable subsets of $G$. He further observed that when $G$ is
not finite, it is possible for the latter inclusion to be strict. 

In this paper we shall consider the ring $\mathcal{WN}(G)$ of subsets
of $G$ generated by all instances of weakly normal formulae, defined in Section \ref{S:wn}. It is not difficult to see that
$\mathcal{WN}(G)$ is contained in yet not identical to the stability ring $\mathcal{S}(G)$. Actually, we have the chain of  inclusions \[ \mathcal{W}(G) \subseteq \mathcal{WN}(G) \subseteq \mathcal{S}(G).\]   
In fact, we shall show that
$\mathcal{WN}(G)$ is equal to $\mathcal{W}(G)$ for abelian $G$ (see Proposition 
\ref{P:wn}). This is a local reformulation of the celebrated
 result by Hrushovski and
Pillay \cite[Theorem 4.1]{HP87}. We further deduce a result of
Freiman-Ruzsa type for finite subsets in $\mathcal{WN}(G)$. 

Mimicking the
definition of Sanders in \cite{tS18}, we say that a subset $A$ of $G$ has an
\emph{$(r,k,l)$-weakly normal representation} if
\[A = \bigcup\limits_{j=1}^k B_j \cap \bigcup\limits_{i=1}^l G\setminus C_r, \]
where all the relations $B_1(x+ y),\ldots,B_k(x+ y),C_1(x+ y),\ldots,C_l(x+ y)$
are $r$-weakly normal.

\begin{teoB}\label{thm:teoB}
Given natural numbers $r$, $k$, and $l$,
there are natural numbers
$n=n(r,k,l)$ and $m=m(r,k,l)$ such that for any abelian
group $G$ and any subset $A\subseteq G$ with an $(r,k,l)$-weakly 
normal
representation, there are
subgroups $H_1,\ldots, H_n$
of $G$, each contained in $A-A$, with \[A\subseteq \bigcup\limits_{i=1}^{n}
g_i+H_i,\]
for some $g_1,\ldots, g_n$ in $A$. Furthermore, each $H_i$ is a boolean
combinations of complexity at most $m$ of translates of $A$.

In particular, if $A$ is finite, we have that $|A\cap
	(g+H)|\geq |A|/n$ for some $g$ in $A$ and some subgroup $H\leqslant G$ 
	contained in $A-A$.
\end{teoB}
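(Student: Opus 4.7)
The plan is to combine Proposition \ref{P:wn} with a compactness argument in order to extract the uniform bounds. Applying Proposition \ref{P:wn}, which asserts that $\mathcal{WN}(G) = \mathcal{W}(G)$ in the abelian setting, expresses $A$ as a finite boolean combination of cosets of subgroups $H_1', \ldots, H_t'$ of $G$. From this I would cover $A$ by cosets $g_1 + H_1, \ldots, g_n + H_n$, with each $g_i$ chosen in $A$ and each $H_i$ obtained as the model-theoretic stabilizer of a non-empty atom $\alpha$ of the boolean combination. Standard properties of stabilizers in weakly normal theories imply $\stab(\alpha) \subseteq \alpha - \alpha \subseteq A - A$, since $\stab(\alpha)$ acts by translation on $\alpha$; taking $g_i$ to be any representative in $\alpha \subseteq A$ shows $A \subseteq \bigcup_i (g_i + H_i)$. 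Moreover, each $H_i$ is itself a boolean combination of translates of $A$ of complexity controlled by $r$, as furnished by the same local analysis.

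For the uniformity of $n = n(r, k, l)$ and $m = m(r, k, l)$, I would proceed by compactness. Suppose toward a contradiction that the conclusion fails for some fixed $r, k, l$; then for each pair $(n, m) \in \N^2$ one finds an abelian group $G_{n,m}$ and a subset $A_{n,m} \subseteq G_{n,m}$ violating the conclusion. A non-principal ultraproduct yields an abelian group $G^* = \prod_{n,m} G_{n,m}/\mathcal{U}$ and a subset $A^* \subseteq G^*$ which still admits an $(r, k, l)$-weakly normal representation, as this is expressible by a first-order formula with a bounded number of parameters. Applying the preceding paragraph to $A^*$ in $G^*$ produces a cover with concrete values $n_0$ and $m_0$, and by \L{}o\'{s}'s theorem the same cover transfers back to $A_{n,m}$ for $\mathcal{U}$-almost every index, contradicting the choice of counterexamples. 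The ``in particular'' statement for finite $A$ then follows by pigeonhole from the cover $A \subseteq \bigcup_{i=1}^n (g_i + H_i)$.

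The principal obstacle is the quantitative stabilizer analysis invoked in the first paragraph: showing explicitly that each $H_i$ can be realized as a boolean combination of translates of $A$ of complexity bounded in terms of $r$, $k$, $l$, and that the number of non-empty atoms of the boolean combination is likewise bounded. Both bounds are precisely what the local theory of weakly normal formulas supplies in the proof of Proposition \ref{P:wn}. The compactness route above removes the need to track these complexities explicitly, extracting them instead from the qualitative statement of that proposition.
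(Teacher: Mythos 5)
Your first paragraph has a genuine gap, not merely a complexity-bookkeeping issue to be cleaned up by compactness. You propose to cover $A$ by cosets $g_i + H_i$ where $H_i = \stab(\alpha)$ is the stabilizer of a non-empty atom $\alpha$ of the boolean combination supplied by Proposition~\ref{P:wn}, with $g_i \in \alpha$. But an atom---an intersection of cosets of the $H_j'$ with complements of such cosets---is not in general a single coset of its set-theoretic stabilizer; it may be a union of arbitrarily many. Concretely, in $G = \mathbb{Z}$ take $A = p\mathbb{Z} \setminus pq\mathbb{Z}$ with $q\ge 3$: this admits a $(2,1,1)$-weakly normal representation with $B_1 = p\mathbb{Z}$ and $C_1 = pq\mathbb{Z}$, is itself a single atom, has stabilizer $pq\mathbb{Z}$, and consists of $q-1$ cosets of $pq\mathbb{Z}$. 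So the number of cosets your construction produces grows with $q$, which cannot be bounded by $n(2,1,1)$. (The correct cover here uses $H_1 = p\mathbb{Z} = A \cup (A+p)$, not the atom's stabilizer.) The fact that a weakly normal object sits inside a single coset of its stabilizer is a property of complete local $\varphi$-types over a model---this is precisely Lemma~\ref{L:wn_stab}---and it fails for arbitrary $\varphi$-definable sets such as atoms.

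Your outer compactness scaffolding in the second paragraph is essentially the paper's, and the pigeonhole for finite $A$ is fine, but the route through Proposition~\ref{P:wn} and atom stabilizers does not supply the cover in the ultraproduct. The paper instead works directly with types: it forms the family $\mathcal{F}$ of $M$-definable cosets $m + H$ with $m \in A(M)$ and $H \subseteq A-A$ a finite boolean combination of translates of $A$; uses {\L}o\'s's theorem to show that no finite subfamily of $\mathcal{F}$ covers $A$; extends $A(x)$ by compactness to a complete $\varphi$-type $p$ over $M$ consistent with avoiding every member of $\mathcal{F}$; and then derives a contradiction, since Remark~\ref{R:stab_def} makes $\stab_\varphi(p)$ a $\varphi$-definable subgroup (hence a boolean combination of translates of $A$), $\stab_\varphi(p)\subseteq A-A$ because $p$ contains $A(x)$, and Lemma~\ref{L:wn_stab} forces $p$ to imply a coset $m + \stab_\varphi(p)$ with $m \in A(M)$, which therefore belongs to $\mathcal{F}$.
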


In contrast to Theorem A, the subset $A$ above need not have small doubling or tripling. Indeed,  there is no correlation for finite sets between having a weakly normal
representation and small doubling: consider the group $G=\mathbb{F}_p^2$ and let $A$ be
the subset $(\mathbb{F}_p\times \{0\} \cup \{0\}\times\mathbb{F}_p)$, which has a
$(2,2,0)$-weakly normal representation. However, the quantity
\[ \frac{|A+A|}{|A|} = \frac{p^2}{2p-1}\]
is not uniformly bounded  for large $p$.

Throughout this paper, we will assume a certain familiarity with model theory. We refer the reader to
\cite{TZ12} for an excellent introduction to the subject. Basic notions related to local stability and weak normality are recalled and developed in Sections \ref{S:local} and \ref{S:wn}, respectively. Section \ref{S:measure} is devoted to a discussion of Keisler measures on a certain boolean algebra arising from sets of small tripling, and the associated measure-theoretic stabilizers. The proofs of our main results are given in Section \ref{S:main}.

\begin{ackno}
The authors wish to thank Gabriel Conant and Caroline Terry for many helpful
conversations and useful comments on a previous version of this note.
\end{ackno}

\section{Local stability}\label{S:local}

We work inside a sufficiently saturated model $\UU$ of a complete theory $T$
with infinite models in a language $\LL$.

Recall that a formula $\varphi(x,y)$ is \emph{$r$-stable} with respect to the
partition of the variables into the tuples $x$ and $y$ if there are no tuples
$a_1,\ldots, a_r, b_1, \ldots, b_r$ such that $\varphi(a_i,b_j)$
 holds if and only if  $i\leq j$.

A formula is stable if it is $r$-stable for some $r$. Stable formulae are
closed under boolean combinations (see \cite{TW18} for a finitary version of this fact). A set $X$ is \emph{$\varphi$-definable} over a
subset $A$ of parameters if it is
definable by a boolean combination of instances $\varphi(x,a)$ with $a$ in
$A$. By a \emph{$\varphi$-type} over a subset $A$ we mean a maximal finitely
consistent collection of instances of the form $\varphi(x,a)$ or
$\neg\varphi(x,a')$ for $a, a'$ in $A$.

The space of $\varphi$-types $S_\varphi(\UU)$ is a compact Hausdorff
$0$-dimensional
topological space, with basic clopen sets of the form
\[ [X] =\{ p\in S_\varphi(\UU)\,|\, p \cup\{X(x)\} \text{ is finitely
consistent} \},\]
where $X(x)$ is $\varphi$-definable.  Given a stable formula $\varphi(x,y)$ and a partial
$\LL$-type $\pi(x)$, the collection
\[X_\pi=\{q(x)\in  S_\varphi(\UU)\,|\, q(x) \cup \pi(x) \text{ is finitely
	consistent} \}\]
is a closed, hence compact,  subset of $S_\varphi(\UU)$ with integer-valued
\emph{Cantor-Bendixson rank} $\CAN_\varphi(\pi)$. Thus, any
element
$q(x)$ in $X_{\pi}$ can be isolated from all other types of rank at least $\CAN_\varphi(q)$ by the neighbourhood $[\chi]$ of some formula
$\chi(x)$. 
Furthermore,
the space $X_\pi$ contains only finitely
many elements
of maximal rank. The number of such elements is the
\emph{$\varphi$-multiplicity}
of $\pi$, see \cite[Chapter 6]{eC11}.

If $\varphi(x,y)$ is stable, then
every $\varphi$-type $p(x)$ over a small
submodel $M$ is \emph{definable}, that is, there is a formula $\theta(y)$ with
parameters over $M$ such that \[ \varphi(x,m) \in p \ \Longleftrightarrow \
\theta(m),\] for all $m$ in $M$.  Furthermore, the definable set $\theta(y)$
above is unique and can be defined by a positive boolean
combination of instances $\varphi(a,y)$ with parameters in $M$ (cf. \cite[Lemma
5.4]{HP94}). We refer to this definable set as the \emph{$\varphi$-definition}
$(d_p\varphi)(y)$ of $p$.  Given a superset $B\supseteq M$ of $\UU$, there is a
unique $\varphi$-type over $B$ extending $p$ which is again definable over $M$,
namely \[ \{\varphi(x,b) \,|\, (d_p\varphi)(b)\}\cup \{\neg\varphi(x,b') \,|\,
\neg(d_p\varphi)(b')\}.\]
We refer to this type as the \emph{non-forking extension} $p\nf B(x)$
of $p(x)$ to
$B$. The \emph{global non-forking extension} of $p$ is the $\varphi$-type $p\nf
\UU$. In fact,  the unique global non-forking extension of $p(x)$ is the only
element in $X_p$ of rank $\CAN_\varphi(p)$, so $p$ has $\varphi$-multiplicity
$1$ (cf. \cite[Proposition 6.13 \& Corollary 6.15]{eC11}).

\medskip
Henceforth, we will assume that the underlying structure $\UU$ carries a
definable group structure $(G,\cdot)$ without parameters. In order to
analyse the structure of an arbitrary stable subset $A$ of $G$, it suffices
expand the language by a distinguished unary predicate, whose realisations are
exactly
the elements in $A$. Thus, we may assume that the formula
$\varphi(x,y)= A(y\cdot x)$ is stable, for some fixed definable subset $A$ of
$G$.

Note that $\varphi(x,y)$ is \emph{equivariant} (see \cite[Definition
5.13]{HP94}), that is,
every
left-translate of an instance of $\varphi$ is again an instance of $\varphi$. 
Given a stable equivariant formula $\varphi(x,y)$, there is a distinguished 
subgroup of $G$ which is  $\varphi$-definable, relative to $G$, as  first 
observed in \cite{HP94}. The following fact can be found in 
\cite[Theorem 2.3]{CPT19}.
	
\begin{fact}\label{F:HrPi}
Given a stable equivariant formula $\varphi(x,y)$ and a definable group $G$ 
over a model $M$, there is a subgroup $G_\varphi^0$ of 
finite index in $G$ which is $\varphi$-definable over $M$ (relative to $G$) 
such that for any coset $C$ of $G_\varphi^0$ and any 
$\varphi$-definable subset $X$, either $X\cap C$ or $C\setminus X$ is 
generic, in the sense that finitely many translates cover $G$.
\end{fact}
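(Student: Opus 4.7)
The plan is to follow the Hrushovski--Pillay approach alluded to in the preceding paragraphs and construct $G_\varphi^0$ as the pointwise stabilizer of the finite collection of generic $\varphi$-types of $G$. Since $\varphi$ is stable, the partial type $G(x)$ has integer-valued $\CAN_\varphi$-rank, and by the finite-multiplicity property recalled above only finitely many $\varphi$-types $p_1,\ldots, p_k$ extending $G(x)$ attain this maximal rank; these are the \emph{generic} $\varphi$-types of $G$, each definable over $M$ via its $\varphi$-definition $d_{p_i}\varphi(y)$.

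Since $\varphi$ is equivariant, left translation by any $g\in G$ preserves the class of $\varphi$-definable subsets of $G$ and preserves $\CAN_\varphi$-rank, yielding an action of $G$ on $S_\varphi(\UU)$ which permutes the finite set $\{p_1,\ldots, p_k\}$. I would then define
\[G_\varphi^0 \;:=\; \bigcap_{i=1}^{k}\stab(p_i),\]
the kernel of the resulting homomorphism $G\to\operatorname{Sym}(\{p_1,\ldots,p_k\})$; as such it is automatically a normal subgroup of finite index in $G$. The $\varphi$-definability of $G_\varphi^0$ over $M$ is then to be established by expressing the condition ``$g$ fixes $p_i$" through the $M$-definition $d_{p_i}\varphi$ and equivariance, translating it into a boolean combination of instances of $\varphi$ with parameters in $M$.

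For the dichotomy clause, the key observation is that every coset of $G_\varphi^0$ contains at least one generic: since $G$ acts transitively on left cosets and permutes $\{p_1,\ldots, p_k\}$, the $G$-orbit of any single $p_i$ already meets every coset. Given such a coset $C$, a generic $p\in C$, and an arbitrary $\varphi$-definable subset $X$, completeness of $p$ forces either $X\in p$ or $\neg X\in p$; one of $X\cap C$ or $C\setminus X$ then contains $p$ and is therefore a $\varphi$-definable subset of $G$ of maximal $\CAN_\varphi$-rank. To upgrade ``contains a generic type" to ``finitely many left translates cover $G$", I would argue by compactness: otherwise the partial $\varphi$-type expressing $G(x)$ together with avoidance of every left translate of the set would be finitely consistent, producing a $\varphi$-type on $G$ disjoint from the whole $G$-orbit of $p$, contradicting the fact that this orbit sits inside the finite generic set.

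The step I expect to be most delicate is the explicit verification that $\stab(p_i)$ is genuinely $\varphi$-definable relative to $G$ and over $M$ rather than merely type-definable: the naive formulation of ``$g$ fixes $p_i$" involves a universal quantifier over the parameter variable, and reducing it to a finite boolean combination of instances of $\varphi$ requires exploiting both the explicit shape of $d_{p_i}\varphi$ and the finite multiplicity supplied by the stability assumption.
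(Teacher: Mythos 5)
The statement is labelled a \emph{Fact} and is cited to \cite[Theorem 2.3]{CPT19}; the paper itself supplies no proof, so there is no in-paper argument to compare against. Your proposal is nonetheless worth examining on its own terms, and I see two genuine gaps.

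\emph{On $\varphi$-definability.} You correctly flag this as the delicate step, but the sketch does not close it. Unwinding ``$g$ fixes $p_i$'' via the isolating formula gives a condition of the form ``$g\cdot\chi\in p_i\nf\UU$,'' which reduces to a boolean combination of $(d_{p_i}\varphi)(b_{jk}\cdot g^{-1})$. Since $(d_{p_i}\varphi)(y)$ is a positive boolean combination of instances $\varphi(a,y)=A(y\cdot a)$, each such term has the shape $A(b\cdot g^{-1}\cdot a)$. Read as a formula in the variable $g$, this is \emph{not} an instance of $\varphi$ unless the group is abelian (compare Remark~\ref{R:stab_def}, which is explicitly restricted to abelian $G$). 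So $\stab_\varphi(p_i)$ is a priori only $M$-definable, not $\varphi$-definable, and neither is the finite intersection. The standard remedy (as in the cited source) is to build $G_\varphi^0$ directly as the intersection of all $\varphi$-definable finite-index subgroups of $G$, use the Baldwin--Saxl chain condition supplied by stability to see this is a \emph{finite} intersection (hence $\varphi$-definable of finite index), and only afterwards relate it to stabilizers of generics.

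\emph{On the compactness step.} The final paragraph does not produce a contradiction. You obtain a $\varphi$-type $q$ extending $G(x)$ that lies outside every $[g\cdot Y]$, and you note this puts $q$ outside the $G$-orbit of $p$. But nothing forces $q$ to lie in that orbit, or even to be generic: $q$ could simply be a type of strictly smaller $\CAN_\varphi$-rank, and the hypothesis that the orbit of $p$ ``sits inside the finite generic set'' is perfectly compatible with there being other, non-generic types that your partial type picks out. The implication ``$Y$ has maximal $\CAN_\varphi$-rank $\Rightarrow$ $Y$ is generic'' is exactly the non-trivial content of the Fact (and is what the paper spells out \emph{after} invoking it); it cannot be obtained by this compactness argument alone. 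One needs, in addition, that each coset of $G_\varphi^0$ contains a \emph{unique} type of maximal rank — a uniqueness statement you never establish and which is itself the crux of the proof in the literature.

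In short: the skeleton (kernel of the action on the finite set of generics, normal of finite index, every coset meets the orbit) is sound, but the two load-bearing steps — $\varphi$-definability for arbitrary $G$, and the upgrade from ``maximal rank'' to ``generic'' — are both left open, and the compactness argument offered for the second one does not work as stated.
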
	

The Cantor-Bendixson rank of a union is the maximum of 
the ranks of 
the sets in the union, so every generic $\varphi$-definable subset 
of $G$ has maximal Cantor-Bendixson 
rank $\CAN_\varphi(G(x))$. On the other hand, if $X$ is a 
$\varphi$-definable subset of $G$ of maximal 
Cantor-Bendixson rank $\CAN_\varphi(G(x))$, it must be generic: 
indeed, since $G_\varphi^0$ has finite index in $G$, there must 
be a coset $C$ of $G_\varphi^0$ 
such that $X\cap C$ has rank $\CAN_\varphi(G(x))$. We need only 
show that $C\cap X$ is generic. Otherwise, the set $C\setminus 
X$ is generic, so finitely many translates will cover $G$. Each such 
translate has maximal rank, yet every coset of $G^0_\varphi$ 
contains a unique $\varphi$-type of maximal rank.

Given a $\varphi$-type $p(x)$ over a submodel $M$, we define its 
\emph{stabilizer} to
be
the subgroup \[\stab_\varphi(p)= \big\{g\in G\,|\, \forall u \big(
(d_p\varphi)(u)
\leftrightarrow (d_p\varphi)(u\cdot g) \big)   \big\}. \]
The stabilizer is clearly a definable subgroup of $G$ with parameters from $M$. 
 The 
following elementary
remark shows that the stabilizer is $\varphi$-definable whenever $G$ is abelian.

\begin{remark}\label{R:stab_def}
If $(G,+)$ is abelian, then the subgroup $\stab_\varphi(p)$ is
$\varphi$-definable
over $M$.
\end{remark}
\begin{proof}
Let $q(x)$ be the unique global non-forking extension of $p(x)$. Choose a
$\varphi$-formula \[ \chi(x)= \bigvee\limits_{j\in J}
\bigwedge\limits_{i\in I}  \varphi(x,b_{ij}) \land\neg\varphi(x,c_{ij}) \]
such that $q$ lies in the neighborhood $[\chi]$, with $\chi(x)$ of
Cantor-Bendixson rank $\CAN_\varphi(p)$ and $\varphi$-multiplicity $1$.

Now, an element $g$ in $G$ belongs to $\stab_\varphi(p)$ if and only if the
$\varphi$-type $g+q$ equals $q$, that is, if and only if $
\chi(x)-g$ belongs to $q$, or equivalently, if and only if

\[ \bigvee\limits_{j\in J}
\bigwedge\limits_{i\in I}  (d_p\varphi)( b_{ij} +g)
\land\neg(d_p\varphi)( c_{ij}+g). \]
Recall that $(d_p\varphi)(y)$ is a positive boolean combination of instances
$\varphi(a, y)$. Since $G$ is abelian, the formula $\varphi(x,y+z)$ is
equivalent to $\varphi(x+ y, z)$, so the above condition on $g$ is
equivalent to a boolean combination $\psi(z,a')$ of instances of $\varphi(a',
z)$,
for some choice of parameters $a'$ in $G$.  In particular, the
formula
\[ \exists u \forall z\big(\stab_\varphi(p)(z) \leftrightarrow \psi(z,u)\big)
\]
 holds in $\UU$. Since $M$ is an elementary substructure of $\UU$, there are
 some parameters $m$ in $M$ such that $\stab_\varphi(p)(M)$ equals $\psi(M,
 m)$, and thus the $\varphi$-formula $\psi(z, m)$ defines the subgroup
 $\stab_\varphi(p)$ in $G$.
\end{proof}

Note that if $G$ is abelian, then $\varphi(x,y)=\varphi(y,x)$. Given global $\varphi$-types $p(x)$ and
$q(y)$ in $S_\varphi(\UU)$, Harrington's lemma \cite[Lemma 6.8]{eC11}
yields
that \[  q(y) \in [(d_p\varphi)(y)] \Leftrightarrow p(x) \in
[(d_q\varphi)(x)].\]
A standard argument yields the following result, whose short
proof we include for completeness.

\begin{remark}\label{R:rank_Stab}
If $(G,+)$ is abelian, then given a $\varphi$-type $p$ over $M$  \[
\CAN_\varphi(\stab_\varphi(p)) \leq \CAN_\varphi(p). \]
\end{remark}
\begin{proof}
Let $q$ be a global type in $[\stab_\varphi(p)]$ of maximal rank, and choose a
realization  $b$ of $q\restr M$. Note that $q$ is a non-forking extension
of $q\restr M$, since $\stab_\varphi(p)$ is definable over the model $M$. Let
$a$ realize the non-forking extension
$p\nf
{M\cup\{b\}}$, which is definable over $M$ by the formula $(d_p\varphi)(y)$.
Thus, the element $a+b$ realizes $p$, since $-b$ belongs to
$\stab_\varphi(p)$.

Let us first show that $b$ realizes the non-forking extension $q\restr
{M\cup\{a\}}$ of $q\restr M$. It suffices to see that $\varphi(a,b)$ holds if
and only if $(d_q\varphi)(a)$. Now,
\begin{align*}
(d_q\varphi)(a)  & \Longleftrightarrow p\nf \UU(x) \in
[(d_q\varphi)(x)]   \stackrel{\text{Harrington}}{\Longleftrightarrow}  q(y) \in
[(d_p\varphi)(y)] \\ &
\Longleftrightarrow (d_p\varphi)(b) \Longleftrightarrow
\varphi(x,b)  \in
p\nf {M\cup\{b\}}   \Longleftrightarrow \varphi(a,b) \text{ holds.}
\end{align*}
As the formula $\varphi$ is equivariant, addition by an element preserves the
rank of formulae, so
\begin{align*}
\CAN_\varphi(\stab_\varphi(p)) & = \CAN_\varphi(q\restr M) =
\CAN_\varphi(q\restr {M\cup \{a\}}) = \CAN_\varphi(b/M\cup \{a\}) \\
	&= \CAN_\varphi(a+b/M\cup \{a\})\leq \CAN_\varphi(a+b/M)=\CAN_\varphi(p),
\end{align*}
as desired.
\end{proof}

\section{Weak Normality}\label{S:wn}
Given a natural number $k$, a formula
$\psi(x,y)$ is \emph{$k$-weakly normal} if, whenever the
instances $\psi(x,b_1), \ldots, \psi(x,b_k)$ are pairwise distinct,
the intersection $\bigcap_{i=1}^k \psi(x,b_i)$ is empty \cite{HP87}.  A formula is weakly
normal if it is $k$-weakly normal for some natural number $k$. The conjunction
of weakly normal formulae is again weakly normal. However, neither the negation
nor the disjunction of two weakly normal formulae need necessarily be weakly normal.

It is easy to see that a $k$-weakly normal formula is $k$-stable. If not,
there is a sequence $(a_i, b_i)_{1\leq i\leq k}$ witnessing the failure of
stability. Since $a_j$ belongs to $\psi(x,b_j)$ but not to $\psi(x,b_i)$ for
$i<j$, the instances are pairwise distinct. However, the element $a_1$ belongs
to their common intersection, so $\psi(x,y)$ is not
$k$-weakly normal. Furthermore,  a formula $\psi(x,y)$ is  $2$-stable precisely
if it is
$2$-weakly normal.  Indeed, if $\psi(x,y)$ is not $2$-weakly normal,
we can find  two distinct instances $\psi(x,b_1)$ and $\psi(x,b_2)$ with non-empty
intersection. We may
assume that there is some $a_2$ in $ \psi(x,b_2)\setminus \psi(x,b_1)$.
As the intersection $\psi(x,b_1)\cap \psi(x,b_2)$ is non-empty,  choose $a_1$
in  $\psi(x,b_1)\cap\psi(x,b_2)$ and note that \[ \psi(a_i,b_j) \Leftrightarrow
1\leq i\leq
j\leq 2,\]
so $\psi(x,y)$ is not $2$-stable.

Formulae which are 2-stable are very special. For example, in the setting of a
group $G$ with a fixed definable subset $A$, the formula $\varphi(x,y)=A(y\cdot
x)$ is $2$-stable if and only if $A$ is either empty or a coset of a subgroup
of $G$. Recall that a subset
$A$ of an abelian group $G$ is Sidon if, whenever the $4$-tuple 
$(a_1,a_2,a_3,a_4)$ of elements
of $A$ satisfies
$a_1-a_2=a_3-a_4$, then $a_1=a_2$ (and hence $a_3=a_4$) or $a_1=a_3$ (and thus
$a_2=a_4$). Sidon subsets of the integers, such as $2^{\mathbb N}$ or
$3^{\mathbb N}$, are 3-stable, but need not lie in the coset ring
$\mathcal{W}(\mathbb Z)$ \cite{tS18}.

\begin{remark}\label{R:stab_wn}
In general, stability need not imply weak normality. For a Sidon set $A$ of 
cardinality at least $k$, the formula  $A(x+y)$ cannot be $k$-weakly normal. 
 Choose $k$ distinct elements $a_1,\ldots, a_k$ in $A$ and consider the 
 collection of sets $(-a_j+A)_{1\leq j\leq k}$.  The element $0$
 belongs to their
 common intersection, yet they are pairwise distinct sets.
 
\end{remark}

Since a definable set is defined over a submodel $N$ if and only if it only has
finitely many distinct automorphic copies over $N$ (see for example \cite[Proposition
1.11]{eC11}), we deduce the following
easy observation concerning sets defined by an instance of a weakly normal formula.

\begin{remark}\label{R:defset_wn}
Let $X$ be a definable set given by an instance of a weakly normal formula. Then the
set $X$ is definable over any submodel containing a realization of $X$.
\end{remark}

A remarkable property of every weakly normal formula $\psi$ is that the definition
$(d_p\psi)$ of every local type $p$
over an arbitrary set of parameters is explicit, in contrast to a general
stable formula (cf. \cite[Theorem 8.3.1]{TZ12}): indeed, given a $k$-weakly
normal formula $\psi(x,y)$, an instance $\psi(x,a)$ belongs to the $\psi$-type
$p=\tp_\psi(c/A)$ if and only if it contains the set \[ \mathcal{X}_{p,\psi}=
\bigcap\limits_{\psi(x,a') \in p} \psi(x,a').\]
This set is definable since it is the intersection of at most $k-1$
instances in $p$ (notice that $\mathcal{X}_{p,\psi}$ is the empty set 
if and only if  the collection of positive instances $\psi(x,a')$  in $p$ 
is empty). It suffices to
set \[(d_p\psi)(y) =\forall x \big( \mathcal{X}_{p,\psi}(x) \rightarrow
\psi(x,y) \big) \text{ when $\mathcal{X}_{p,\psi}\neq \emptyset$}, \] and
\[(d_p\psi)(y) = (y\neq y) \text{ otherwise.}\]

\begin{remark}\label{R:deftype_wn}
Assume that the formula $\chi(x,y)$ is a boolean combination of weakly normal
formulae. Then every $\chi$-type $p$ is definable over any
submodel containing a realization of $p$.
\end{remark}
Note that we do not require that the submodel contains the parameter set of
$p$, which is assumed to be small with respect to the saturation of $\UU$. 

\begin{proof}
If the formula $\chi(x,y)$ is a boolean combination of the weakly normal
formulae $\psi_1(x,y),\ldots, \psi_r(x,y)$, then the $\chi$-type $p=\tp_\chi(c/A)$
is
determined by the collection of types $q_1=\tp_{\psi_1}(c/A), \ldots,
q_r=\tp_{\psi_r}(c/A)$. Hence, the $\chi$-definition $(d_p\chi)$ is determined
by the definable sets $\{(d_{q_i}\psi_i)\}_{1\leq i\leq r}$. Each
$(d_{q_i}\psi_i)$ is determined by the corresponding definable set
$\mathcal{X}_{q_i,\psi_i}$, as in the previous discussion, which is definable
over any submodel containing $c$, by Remark \ref{R:defset_wn}.
\end{proof}

A well-known result of Hrushovski and Pillay \cite[Lemma 4.2]{HP87} shows that, 
in a theory where all formulae are boolean combinations of weakly normal ones,
types are generic in cosets of their stabilizers. In particular, groups
definable in such theo\-ries are virtually abelian, that is, abelian-by-finite.
We will provide a local version of their results for abelian groups, following
closely \cite[Lemma 2.6 \& Remark 2.7]{aP02}.
\begin{lemma}\label{L:wn_stab}
Let $(G,+)$ be abelian and assume that the formula $\varphi(x,y)=A(x+y)$ is a
boolean combination of weakly normal formulae. Given a $\varphi$-type $p$ over
a model $M$, there exists  some element $m$ in $M$ such that $p\nf \UU$ lies in
the neighborhood $[m+\stab_\varphi(p)]$, that is, the type $p$
implies the $\varphi$-formula over $M$ defining the coset $m+\stab_\varphi(p)$.
\end{lemma}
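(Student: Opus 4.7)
The plan is to show that $q = p\nf\UU$ is concentrated on a single coset of $H = \stab_\varphi(p)$, and that this coset is $M$-definable, whence it contains a point of $M$ by elementarity.

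Take a realization $a$ of $q$ in a sufficiently saturated extension of $\UU$. The crucial step is to establish that the coset $a + H$ is determined by $q$ alone, not by the particular realization. For two realizations $a, a'$ of $q$, the hypothesis gives $\varphi(a,b) \iff \varphi(a',b)$ for $b \in \UU$, which by the equivariance $\varphi(x,y) = A(x+y)$ amounts to $A(a+b) \iff A(a'+b)$ for such $b$. This yields directly that $(a - a')$ stabilizes the set defined by $(d_p\varphi)$ on a coset of the base $\UU$. Weak normality of $\varphi$ is needed to upgrade this local stabilization to the global stabilization required for $a - a' \in H$: by Remark \ref{R:deftype_wn}, the type $q$ is definable over any submodel containing a realization, and the $\varphi$-definition there is built uniformly from the intersections $\mathcal{X}_{q_i,\psi_i}$, each of which is itself definable over such a submodel by Remark \ref{R:defset_wn}. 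Comparing this description of the definition with the $M$-definition $(d_p\varphi)$ and invoking uniqueness of $\varphi$-definitions forces $a - a' \in H$.

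Once $a + H$ depends only on $q$, it is $\mathrm{Aut}(\UU/M)$-invariant: indeed, $q$ is $M$-definable as the unique non-forking extension of the $M$-type $p$, so every $M$-automorphism sends $a$ to another realization of $q$, which by the previous step lies in the same coset $a+H$. Hence $a + H$ is $M$-definable as a subset of $G$; by elementarity of $M \preceq \UU$, this non-empty $M$-definable coset contains a point $m \in M$. Thus $a + H = m + H$, and the $\varphi$-formula $\eta(x - m)$ over $M$ defining the coset $m + H$ (as produced in Remark \ref{R:stab_def}) belongs to $q$, and hence to $p$, as required.

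The main obstacle is the first step: the direct computation from $a, a' \models q$ yields only stabilization of $(d_p\varphi)$ on a coset of the base $\UU$, not on the full ambient model, and so does not a priori place $a - a'$ inside $H$. Weak normality of $\varphi$, via the rigid structure of weakly normal instances highlighted in Remarks \ref{R:defset_wn} and \ref{R:deftype_wn}, is precisely what allows one to promote this local information to global stabilization.
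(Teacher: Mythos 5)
Your high-level goal is sound: show the coset $a+\stab_\varphi(p)$ is $M$-definable, then pick $m\in M$ by elementarity. The structure of your final paragraph — $\mathrm{Aut}(\UU/M)$-invariance, elementarity — is also in the spirit of what the paper does. But the crucial first step, namely that any two realizations $a,a'$ of $q=p\nf\UU$ satisfy $a-a'\in H=\stab_\varphi(p)$, is exactly the content of the lemma restated, and your argument for it contains a genuine gap.

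As you correctly observe, for $a,a'\models q$ in an extension $\UU'\succ\UU$, the identity $\varphi(a,b)\leftrightarrow\varphi(a',b)$ for $b\in\UU$ only gives $(d_p\varphi)(u)\leftrightarrow(d_p\varphi)(u+(a-a'))$ for $u$ in the set $a'+\UU$, not for all $u\in\UU'$. This is not membership in $H$: since $a'\notin\UU$, the set $a'+\UU$ is not definable, so one cannot appeal to elementarity of $\UU\preceq\UU'$ to conclude that $a-a'$ satisfies the $M$-formula defining $H$. Your proposed remedy — ``comparing the description of the definition coming from Remarks \ref{R:defset_wn} and \ref{R:deftype_wn} with the $M$-definition and invoking uniqueness'' — is not an argument: those remarks say that the $\varphi$-definition of a type is definable over any submodel containing \emph{a realization}, but $(d_p\varphi)$ is already definable over $M$, and nothing about that observation controls the behaviour of $(d_p\varphi)(u+(a-a'))$ at arbitrary $u$. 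To make Remark \ref{R:deftype_wn} do real work here, one must be careful about \emph{which} translate of the type one is looking at and \emph{which} submodel witnesses its definability, and that requires a concrete construction, not a comparison of definitions.

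The paper's proof supplies precisely the missing construction. It chooses a $\varphi$-type $q$ over $M$ of maximal Cantor–Bendixson rank, realizes it by a coheir $c$ over $M\cup\{a\}$, and then finds a submodel $N\succeq M$ containing $c$ over which $\tp(a/N)$ is an heir of $\tp(a/M)$. Harrington's lemma is then used to show $a$ realizes $p\nf{M\cup\{c-a\}}$, so $c=a+(c-a)$ realizes the translate $p\nf{M\cup\{c-a\}}+(c-a)$. It is at \emph{this} point that Remark \ref{R:deftype_wn} bites: the translated type has a realization $c\in N$, hence is definable over $N$, which enables the automorphism computation showing that $a-\sigma(a)\in H$ for $\sigma\in\mathrm{Aut}(\UU/N)$. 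Finally, the heir property transfers definability of $H+a$ from $N$ down to $M$. Without the heir/coheir pair and Harrington's lemma, I do not see how to obtain the global stabilization you need, and your proposal as written does not supply it.
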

Furthermore, the proof of the above result yields that 
$\CAN_\varphi(\stab_\varphi(p)) = \CAN_\varphi(p)$, but this fact will not be used 
in the sequel.
\begin{proof}
Let $a$ be a realization of $p(x)$. Since $G$ is abelian, every coset of
$H=\stab_\varphi(p)$ is $\varphi$-definable (since $\varphi(x,y)$ is 
equivariant). We want to show that the coset
$H+a$ is $\varphi$-definable over $M$. It suffices to show that it is definable
over $M$, for $M$ is an elementary substructure. Since the element $a$ lies in
$H+a$, if this coset is definable over $M$, then the type $p$ must imply the
corresponding $\varphi$-formula over $M$ defining it. Remark
\ref{R:rank_Stab} yields then the equality of ranks.

To prove that $H+a$ is definable over $M$, we need only show that $H+a$  is
definable over a submodel $N\succeq M$ such that $\tp(a/N)$ is an heir of
$\tp(a/M)$: indeed, suppose that $H+a$ is definable over such $N$, so there are an
$\LL_M$-formula $\theta(x,z)$ and some tuple $n$ in $N$ such that the formula
$\theta(x,n)$ defines $H+a$. Note that this coset is definable over
$M\cup\{a\}$. In particular, the formula
\[ \forall x \big((H+u)(x)\leftrightarrow \theta(x,n) \big)\]
belongs to $\tp(a/N)$. Thus, we
find a tuple $m$ in $M$ such that $\theta(x, m)$ defines $H+a$, by inheritance
of $\tp(a/N)$ over $M$.

Now choose a $\varphi$-type $q(x)$ over $M$ of maximal Cantor-Bendixson rank
$\CAN_\varphi(G)$. The extension $q\nf {M\cup\{a\}}$ is definable over $M$, so
it is finitely satisfiable over $M$ \cite[Lemma I.2.16]{aP96}. Thus we can find
some element $c$ such that the complete type $\tp(c/M\cup \{a\})$ is finitely
satisfiable over $M$ and extends $q\nf {M\cup\{a\}}$.  By a dual argument, we
can find a submodel $N\succeq M$ containing $c$ such that $\tp(a/N)$ is an heir
of $\tp(a/M)$.

\begin{claimstar}
The element $a$ realizes $p\nf {M\cup\{c-a\}}$.
\end{claimstar}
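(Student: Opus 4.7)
The plan is to apply Harrington's Lemma to $P = p\nf{\UU}$ and the global non-forking extension of $\tp_\varphi(c-a/M)$, identified via a rank-preservation argument using equivariance.

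First, I would establish that $\tp_\varphi(c-a/M\cup\{a\})$ has maximal Cantor--Bendixson rank $\CAN_\varphi(G)$. Indeed, the choice of $c$ yields $\tp_\varphi(c/M\cup\{a\}) = q\nf{M\cup\{a\}}$, of maximal rank; and subtraction by $a$ is an $(M\cup\{a\})$-definable bijection on $G$ preserving $\varphi$-ranks on the Stone space. Restricting to $M$, the $\varphi$-type $T := \tp_\varphi(c-a/M)$ has maximal rank as well, hence $\varphi$-multiplicity $1$, so it admits a unique global non-forking extension $T^*$ over $\UU$, definable over $M$, with $T^*\restr{M\cup\{a\}} = \tp_\varphi(c-a/M\cup\{a\})$.

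Next, Harrington's Lemma applied to $P$ and $T^*$ (both $M$-definable global $\varphi$-types in the symmetric formula $\varphi$) yields $(d_p\varphi)(y) \in T^* \iff (d_{T^*}\varphi)(x) \in P$. Since $c-a \models T^*\restr{M\cup\{a\}}$ and $(d_p\varphi)$ has parameters in $M$, the left-hand side evaluates to $(d_p\varphi)(c-a)$. Since $a \models p$ and $(d_{T^*}\varphi)$ has parameters in $M$, the right-hand side evaluates to $(d_{T^*}\varphi)(a)$; this in turn is equivalent to $\varphi(c-a, a) = A(c) = \varphi(a, c-a)$, once more using that $c-a$ realizes $T^*\restr{M\cup\{a\}}$. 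Together with the equivalence $\varphi(a, t) \iff (d_p\varphi)(t)$ for every $t \in M$ (from $a \models p$), this gives $\tp_\varphi(a/M\cup\{c-a\}) = p\nf{M\cup\{c-a\}}$, as desired.

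The main delicate point is ensuring that the two sides of Harrington's Lemma evaluate correctly at $a$ and $c-a$ despite these elements realizing the global types $P$ and $T^*$ only over restricted sets. This is legitimate because the relevant definition schemas only use parameters in $M$ or $M\cup\{a\}$, and the elements $a$ and $c-a$ do realize the corresponding restrictions of $P$ and $T^*$ to those sets.
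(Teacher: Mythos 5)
Your proposal is correct and follows essentially the same path as the paper's own proof: establish by equivariance and rank monotonicity that $\tp_\varphi(c-a/M\cup\{a\})$ has maximal rank and hence is the non-forking extension of $\tp_\varphi(c-a/M)$, then apply Harrington's Lemma to $p\nf\UU$ and the global non-forking extension of that type, and evaluate both sides at $a$ and $c-a$ using $M$-definability of the defining schemes. The only cosmetic difference is that you reason through the global extension $T^*$ whereas the paper works directly with $r(y)=\tp_\varphi(c-a/M\cup\{a\})$ after noting it is $M$-definable; also, the ``multiplicity $1$'' fact you invoke is really a consequence of $T$ being a complete $\varphi$-type over a model rather than specifically of its rank being maximal, but both justifications are valid.
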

\begin{claimstarpf}
We need only show that $\varphi(a,c-a)$ holds if and only
if $(d_p\varphi)(c-a)$ holds. Observe first that
\begin{align*}
\CAN_\varphi(G) & = \CAN_\varphi(q) =  \CAN_\varphi( q\nf {M\cup \{a\}}) =
\CAN_\varphi(\tp_{\varphi}(c/M\cup \{a\})) \\
&= \CAN_\varphi(\tp_{\varphi}(c-a/M\cup \{a\}))\leq
\CAN_\varphi(\tp_{\varphi}(c-a/M))\leq \CAN_\varphi(G).
\end{align*}
Hence, equality holds everywhere, so $r(y)=\tp_{\varphi}(c-a/M\cup \{a\})$ is
definable over $M$ and has maximal rank $\CAN_\varphi(G)$.

Now, the formula $\varphi(a,c-a)$ holds if and only if $\varphi(a,y)$ belongs
to $r(y)$, that is, if and only if the element $a$ realizes
$(d_r\varphi)(x)$, which is definable over $M$. Hence, the formula
$\varphi(a,c-a)$ holds if and only if $p\nf \UU$ lies in $[(d_r\varphi)]$,
which is equivalent to $r\nf \UU$ being contained in $[(d_p\varphi)]$, by
Harrington's lemma. Since $[(d_p\varphi)]$ is definable over $M$ and the
element $c-a$ realizes $r$, the latter is equivalent to $(d_p\varphi)(c-a)$, as
desired.
\end{claimstarpf}

Since $c=a+(c-a)$, the element $c$ realizes the complete $\varphi$-type \[ p\nf
{M\cup\{c-a\}} +(c-a)=\{ \theta(x) \ \varphi\text{-formula over }M\cup\{c-a\} \ | \ \theta\left(x-(c-a)\right) \in p\nf
{M\cup\{c-a\}} \},\] which is again a complete $\varphi$-type over $M\cup\{c-a\}$. In particular, the global
$\varphi$-type $p\nf \UU +(c-a)$ is a non-forking extension of $p\nf
{M\cup\{c-a\}} +(c-a)$. By Remark \ref{R:deftype_wn}, both types are definable
over $N$ (which contains $c$).

Let us now show that the coset $H+a$ is definable over $N$. It suffices to show
that every automorphism $\sigma$ fixing $N$ pointwise fixes the coset
(setwise). Since $p\nf \UU +(c-a)$ is definable over $N$, the automorphism
$\sigma$ fixes $p\nf \UU +(c-a)$, so
\[ p\nf \UU +(c-a) = \sigma(p\nf \UU +(c-a)) = p\nf \UU +(c-\sigma(a)), \]
since $p\nf \UU$ is definable over $M$. Thus, we have $p\nf \UU -a = p \nf \UU
-\sigma(a)$, that is,
 \[ p\nf \UU  = p \nf \UU  + (a-\sigma(a)), \]
and hence $a-\sigma(a)$ lies in $H=\stab_\varphi(p)$, as desired.
\end{proof}

In analogy to the classical result for weakly normal theories, we conclude that
$\varphi$-definable sets are boolean combination of cosets of $\varphi$-definable
groups whenever $\varphi(x,y)=A(x+y)$ is a boolean combination of weakly normal
formulae.

\begin{cor}\label{C:BC}
In an abelian group $G$ (written additively), assume that the formula 
$\varphi(x,y)=A(x+y)$ is a
boolean combination of weakly normal formulae. Every $\varphi$-definable set is a
boolean combination of cosets of $\varphi$-definable subgroups.
\end{cor}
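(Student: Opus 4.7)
The plan is to adapt the classical Hrushovski--Pillay argument to our local setting, proceeding by induction on the $\varphi$-rank $n = \CAN_\varphi(X)$ of the $\varphi$-definable set $X$ to be decomposed. The base case $n = -\infty$ (that is, $X = \emptyset$) is vacuously a boolean combination of cosets, so the content lies entirely in the inductive step. Throughout, we may assume that $X$ is $\varphi$-definable over some sufficiently saturated model $M$ in which both $X$ and all $\varphi$-definable subgroups produced below live.

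For the inductive step I would first isolate the finitely many $\varphi$-types $p_1,\ldots,p_k \in [X] \subseteq S_\varphi(M)$ of maximal rank $n$. Applying Lemma \ref{L:wn_stab} to each $p_i$ produces an element $m_i \in M$ such that $p_i$ implies $x \in C_i$, where $C_i = m_i + \stab_\varphi(p_i)$. By Remark \ref{R:stab_def} each coset $C_i$ is $\varphi$-definable over $M$, and combining Remark \ref{R:rank_Stab} with the rank equality noted just after Lemma \ref{L:wn_stab} gives $\CAN_\varphi(C_i) = n$. The natural candidate for the boolean approximation is then $\bigcup_{i=1}^k C_i$.

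The substantive step I would carry out next is to show that $\CAN_\varphi\bigl(X \triangle \bigcup_i C_i\bigr) < n$. The half $X \setminus \bigcup_i C_i$ is easy: every $\varphi$-type of rank $n$ in $[X]$ coincides with some $p_i$, hence lies in $[C_i]$, so no rank-$n$ type survives outside $\bigcup_i C_i$. The other half, $\bigcup_i C_i \setminus X$, reduces to the key claim that each coset $C_i$ contains $p_i$ as its \emph{unique} $\varphi$-type over $M$ of rank $n$; once this is granted, any type $q \in [C_i] \cap [\neg X]$ of rank $n$ would coincide with $p_i$ and so also lie in $[X]$, a contradiction. This uniqueness is the crux of the proof: I would derive it by applying Lemma \ref{L:wn_stab} to $q - m_i \in [\stab_\varphi(p_i)]$, using additivity of $\CAN_\varphi$ to force $\stab_\varphi(q) \cap \stab_\varphi(p_i)$ to have rank $n$ and hence finite index in both stabilizers, and then leveraging the explicit form of the $\varphi$-definition guaranteed by Remark \ref{R:deftype_wn} to identify $q$ with $p_i$.

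Once the rank drop is in hand, the inductive hypothesis applied to $X \triangle \bigcup_i C_i$ expresses this set as a boolean combination of cosets of $\varphi$-definable subgroups, and since each $C_i$ is itself such a coset, the identity
\[
X \;=\; \Bigl(\bigcup_i C_i\Bigr) \,\triangle\, \Bigl(X \triangle \bigcup_i C_i\Bigr)
\]
produces the desired representation for $X$. The main obstacle, as emphasised above, will be the uniqueness of the maximal-rank type in each coset $C_i$: this is precisely where the weak normality hypothesis (as opposed to mere stability) enters in an essential way through the explicit definitions of Remark \ref{R:deftype_wn}, and without it one would only obtain an approximation of $X$ by cosets up to a $\varphi$-definable set of strictly smaller rank.
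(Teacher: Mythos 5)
Your rank-induction strategy is a legitimate alternative to the paper's route, which instead invokes \cite[Lemma 3.1.1]{TZ12} to reduce the claim to showing that two $\varphi$-types over $M$ implying the same $M$-definable cosets of $\varphi$-definable subgroups must coincide. The rank induction would indeed work \emph{provided} the crux you identify is established: that $C_i = m_i + \stab_\varphi(p_i)$ carries a \emph{unique} $\varphi$-type over $M$ of rank $n$, i.e.\ that $\stab_\varphi(p_i)$ has $\varphi$-multiplicity $1$. That fact is true, but the argument you sketch for it does not establish it.

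The gap is in the final step of the uniqueness claim. Showing that $\stab_\varphi(p_i)\cap\stab_\varphi(q)$ has rank $n$, hence finite index in both stabilizers, does not identify $q$ with $p_i$: two distinct $\varphi$-types over $M$ of the same rank can perfectly well have commensurable --- or even equal --- stabilizers and lie in the same coset $C_i$ (which is then a finite union of cosets of $\stab_\varphi(p_i)\cap\stab_\varphi(q)$, each potentially carrying a rank-$n$ type, with no reason yet for $p_i$ and $q$ to land in the same one). Moreover, it is not at all clear how the explicit $\varphi$-definitions from Remark \ref{R:deftype_wn} would close this gap; that remark guarantees definability over small models but does not force two commensurable stabilizers to produce the same type. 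The correct proof of the multiplicity-one claim actually has to run through the same mechanism the paper uses: choose $a_1\models p_i$ and $a_2\models q\nf{M\cup\{a_1\}}$, observe that both lie in $C_i$ so that $a_2-a_1\in\stab_\varphi(p_i)$, carry out the rank computation
\[
n=\CAN_\varphi(q)=\CAN_\varphi\bigl(\tp_\varphi(a_2-a_1/M\cup\{a_1\})\bigr)\le\CAN_\varphi\bigl(\tp_\varphi(a_2-a_1/M)\bigr)\le\CAN_\varphi\bigl(\stab_\varphi(p_i)\bigr)=n
\]
to conclude that $a_2-a_1$ does not fork over $M$ given $a_1$, apply Harrington's lemma (exactly as in the proof of Remark \ref{R:rank_Stab}) to deduce $a_1\models{p_i}\nf{M\cup\{a_2-a_1\}}$, and finally use $a_2-a_1\in\stab_\varphi(p_i)$ to translate and conclude $a_2\models p_i$, i.e.\ $q=p_i$. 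Once this is in place your induction goes through, but as written the proposal omits the decisive step and substitutes a commensurability argument that does not suffice.
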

Note in particular that a coset of a $\varphi$-definable subgroup is a boolean combination of
translates of $A$.

\begin{proof}
By a straightforward application of \cite[Lemma 3.1.1]{TZ12}, it suffices to show that
whenever two $\varphi$-types $p_1$ and $p_2$ over a submodel $M$ imply the same
$M$-definable
cosets of $\varphi$-definable subgroups (over $M$), then $p_1$ and $p_2$ are the same.

Let $a_1$ realize $p_1$ and choose a realization $a_2$ of ${p_2} \nf {M\cup\{a_1\}}$. Set
$H_1=\stab_\varphi(p_1)$ and $H_2=\stab_\varphi(p_2)$.  By Lemma \ref{L:wn_stab}, both
cosets $a_1+H_1$ and $a_2+H_2$ are $M$-definable. By assumption, since $p_1$ 
clearly implies the formula defining $a_1+H_1$, every realization of $p_2$ lies 
in $a_1+H_1$, and similarly for $p_1$. In particular, the element
$a_1-a_2$ lies in $H_1\cap H_2$.  The rank computation
\begin{align*}
\CAN_\varphi(H_2) & \le \CAN_\varphi(p_2)=\CAN_\varphi({p_2} \nf {M\cup\{a_1\}}) =
\CAN_\varphi(\tp_{\varphi}(a_2/M\cup \{a_1\}))\\
&= \CAN_\varphi(\tp_{\varphi}(a_2-a_1/M\cup \{a_1\}))\leq
\CAN_\varphi(\tp_{\varphi}(a_2-a_1/M))\le \CAN_\varphi(H_2)
\end{align*}
yields that $a_2-a_1$ realizes the non-forking extension of $q=\tp_{\varphi}(a_2-a_1/M)$
over $M\cup\{a_1\}$. As in the proof of Remark \ref{R:rank_Stab}, Harrington's Lemma implies that $a_1$
realizes ${p_1} \nf
{M\cup\{a_2-a_1\}}$. Since $a_2-a_1$ lies in $H_1=\stab_\varphi(p_1)$, we have that
$a_2=a_1+(a_2-a_1)$ realizes $p_1$. Thus the types $p_1$ and $p_2$ are equal, as
desired.
\end{proof}

\section{Ideals and measures}\label{S:measure}

A \emph{Keisler measure} $\mu$ is a finitely additive probability measure on some boolean
algebra of definable subsets of the ambient model \cite{jK87}. Archetypal examples are
measures $\mu_p$, with two possible values $0$ and $1$, given by global
$\varphi$-types
$p$, that is, for every $\varphi$-definable set $X$,
\[ \mu_p(X)=1 \Leftrightarrow p \in [X].\]

\noindent Given a Keisler measure $\mu$, the collection of sets of
measure zero forms an {\em ideal}, that is, it is closed under
subsets and finite unions. A partial type is said to be \emph{wide} (with respect to $\mu$)
if it contains no definable set of measure zero. In particular, since the collection of
measure-$0$ sets forms an ideal, every wide partial type $\pi(x)$ over a parameter set $A$
can be completed to a wide complete type over any arbitrary subset $B$ containing $A$:
indeed, the collection of formulae
\[ \pi(x)\cup\{\neg\varphi(x) \,|\, \varphi(x)\in \mathcal L_B \text{ with } \mu(\varphi(x))=0 \}\]
is clearly finitely consistent, and any completion of this partial type is wide. 
Note that we do not require that every formula in the completion has 
measure.

The measure $\mu$ is said to be \emph{definable over the submodel} $M$ 
(see \cite[Definition 3.19]{sS17}) if for every
$\LL$-formula $\varphi(x,y)$ and every $\epsilon>0$, there is a  partition of
$\UU^{|y|}$ into $\LL_M$-formulae $\rho_1(y),\ldots, \rho_m(y)$ such that for all pairs
$(b, b')$  realizing $\rho_i(y)\land\rho_i(z)$, we have that
\[ |\mu(\varphi(x,b)) -\mu(\varphi(x,b') )|<\epsilon. \]
\noindent In particular, the set of tuples $b$ with $\mu(\varphi(x,b))=0$ is type-definable
over $M$
 and the map
 \[\begin{array}{ccc}
  S_y(M) & \to &  [0,1]\\[1mm]
 \tp(b/M) & \mapsto & \mu(\varphi(x,b))

 \end{array}  \] is continuous, so the value $\mu(\varphi(x,b))$ only depends on $\tp(b/M)$. Note that a global
 $\varphi$-type $p$ is definable over $M$ if and only if the corresponding measure
 $\mu_p$  is.

 Every Keisler measure admits an expansion of the original
 language $\LL$ in which it becomes definable (cf. \cite[Section
 2.6]{eH12}). In this case, a formula of positive measure does not fork over $\emptyset$,
 see \cite[Lemma 2.9 \& Example 2.12]{eH12}.

 In the presence of an ambient group $G$, we will consider the following notion of an \emph{acceptable} set, which
 was introduced as a \emph{near-subgroup} in \cite[Definition 3.9]{eH12}.

 \begin{definition}\label{D:acceptable}
  A definable subset $A$ of $G$ is \emph{acceptable} if there exists a Keisler
  measure  $\mu$ on a boolean algebra of definable subsets of $(A\cup
  A^{\inv}\cup\{\mathrm{id}_G\})^3$ such
  that $\mu(A)>0$, the set $A\cup
  A^{\inv}\cup\{\mathrm{id}_G\}$ has measure $1$ and $\mu(Y)=\mu(X)$ for all definable measurable subsets
  $X$   and $Y$   of  $(A\cup A^{\inv} \cup\{\mathrm{id}_G\})^3$ whenever $Y$ is a translate of $X$.
 \end{definition}

\begin{example}\label{E:amenable}
	Let $G$ be an abelian group, or more generally, an amenable group, equipped with a
	finitely additive probability measure $\mu$. Every subset of positive measure becomes
	an acceptable
	subset of $G$ witnessed by the restriction of $\mu$ with respect to a suitable
	boolean algebra of $(A\cup A\inv \cup\{\mathrm{id}_G\})^3$.  As above, we can
	expand the language of groups to a suitable language $\mathcal L$ in such a way that
	both $A$ and the measure $\mu$ are definable.
\end{example}

\begin{example}\label{E:tripling}
	Consider a finite non-empty subset $A$ of a (possibly infinite group) $G$ with
	tripling $K$, that is, with $|A\cdot A\cdot A| \leq K|A|$. Then $B=A\cup A\inv\cup
	\{\mathrm{id}_G\}$ has size at least $|A|$ and most $2 |A|+1$, and it follows from Ruzsa calculus that $|B\cdot B\cdot B|\leq 14K^3 |A|$. Given a subset
	$X \subseteq B\cdot B\cdot B$,
	set \[ \mu(X) = \frac{|X|}{ |B\cdot B\cdot B|}.\]
	We have thus obtained a finitely additive measure $\mu$ 
	such that $\mu(A)\geq (14K^3)^{-1}$. Hence, the set $A$ is acceptable (with respect to
	the measure $\mu$).

	Furthermore, if $G$ is abelian, it follows from the Pl\"unnecke-Ruzsa inequality \cite[Corollary 6.29]{TV06} that we need only assume that $A$ has small doubling.
\end{example}

Given an acceptable subset $A$ of $G$ with respect to the finitely additive 
definable measure $\mu$ and a complete type $p(x)$ over a
submodel $M$ containing
the formula $A(x)$, define its \emph{(measure-theoretic) stabilizer}
$\stab(p)$ to be
the group generated by the set
$$
\mathrm{st}(p)=\{g\in G: g\cdot p(x) \cup p(x) \text{ is wide}\}.
$$
Note that $\mathrm{st}(p)$ contains the identity element of $G$, whenever the type $p$ is wide. The set
$\mathrm{st}(p)$, and hence $\stab(p)$, is invariant under automorphisms of $\UU$
fixing $M$ pointwise.

Inspired by the corresponding results in geometric stability theory, Hrushovski proved in
\cite[Theorem 3.5]{eH12} that the measure-theoretic stabilizer  is type-definable and
equals the set
$\mathrm{st}(p)\cdot \mathrm{st}(p)$, whenever $p$ is a wide type containing an
acceptable subset $A(x)$
with respect to some (definable) measure $\mu$.  Furthermore,  the
stabilizer is a normal
subgroup of the group $\langle A \rangle$ generated by $A$ and has of bounded index in $\langle A \rangle$, that is, 
the number of cosets of $\stab(p)$ in $\langle A \rangle$ is bounded by the cardinality of saturation
of the ambient model $\UU$. For our purposes, we need a much
weaker statement, namely, that $\mathrm{st}(p)$ contains some wide complete type, for
which we will now give a simple proof.

\begin{lemma}\label{L:st_wide}
 	Let $A$ be an acceptable subset of $G$ with respect to the measure $\mu$, which we
 	assume to be definable over a submodel $M$. Given a wide type $p(x)$ over  $M$
 	containing
 	the formula $A(x)$, there exists a wide type $q(x)$ over $M$ whose realizations belong to the $M$-invariant set $\mathrm{st}(p)$.
 \end{lemma}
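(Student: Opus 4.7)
The plan is to construct a wide complete $M$-type $q(y)$ such that every realisation $g$ of $q$ lies in $\mathrm{st}(p)$. Since $p$ is closed under finite conjunction, the partial type $p\cup g\cdot p$ is wide precisely when $\mu\bigl(\varphi(x)\wedge\psi(g^{-1}x)\bigr)>0$ for every $\varphi,\psi\in p$; arranging this positivity at $g$ is therefore the key objective.

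First I would isolate, for each $\varphi,\psi\in p$, the $M$-definable set
$$\chi_{\varphi,\psi}(y)\ :=\ \bigl(\mu(\varphi(x)\wedge\psi(y^{-1}x))>0\bigr),$$
which is indeed $M$-definable thanks to the $M$-definability of $\mu$. The heart of the argument is showing that $\mu(\chi_{\varphi,\psi})>0$. I would do this by a Fubini-type computation for the definable measure $\mu$: swapping the two integrals and invoking translation-invariance yields
$$\int\mu(\varphi(x)\wedge\psi(y^{-1}x))\,d\mu(y)\ =\ \mu(\varphi)\cdot\mu(\psi)\ >\ 0,$$
which forces the set of $y$ on which the integrand is positive to have positive $\mu$-measure.

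Next I would consider the partial type $\pi(y):=\{\chi_{\varphi,\psi}(y):\varphi,\psi\in p\}$ over $M$ and verify it is wide. For any finite sub-family indexed by $i\le n$, setting $\Phi=\bigwedge_i\varphi_i\in p$ and $\Psi=\bigwedge_i\psi_i\in p$ (using the closure of $p$ under conjunction), one obtains $\chi_{\Phi,\Psi}\subseteq\bigcap_i\chi_{\varphi_i,\psi_i}$ with $\mu(\chi_{\Phi,\Psi})>0$ by the preceding step. The extension property for wide partial types recalled at the start of the section then supplies a wide complete type $q(y)$ over $M$ containing $\pi$. Any realisation $g$ of $q$ satisfies $\mu(\varphi(x)\wedge\psi(g^{-1}x))>0$ for all $\varphi,\psi\in p$, so $g\in\mathrm{st}(p)$; and since $\mathrm{st}(p)$ is $M$-invariant, every realisation of $q$ lies in $\mathrm{st}(p)$.

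The main obstacle is justifying the Fubini identity in the finitely additive, definable Keisler-measure setting: this relies on the product-measure construction for definable measures (available after possibly expanding the language so that $\mu$ becomes definable over $\emptyset$, as noted earlier in the section) and on carefully checking that the translations involved keep sets inside the boolean algebra on which $\mu$ is defined, as is standard in the acceptable-subset framework of \cite{eH12}.
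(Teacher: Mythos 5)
The central difficulty with your approach is that the set $\chi_{\varphi,\psi}(y)$ you write down is not $M$-definable. Definability of $\mu$ over $M$ means precisely that for each $\epsilon>0$ there is a finite $M$-definable partition of $y$-space on which $b\mapsto\mu(\varphi(x)\wedge\psi(b^{-1}x))$ varies by less than $\epsilon$; equivalently, this map is a \emph{continuous} real-valued function on $S_y(M)$. The preimage of $(0,1]$ under such a map, namely your $\chi_{\varphi,\psi}$, is therefore only open in the type-space topology, that is, $\bigvee$-definable (a possibly infinite union of $M$-definable sets), and is definable only in the special situation where the measure is bounded away from zero on its support. Consequently, $\mu(\chi_{\varphi,\psi})$ is not a well-defined quantity (it is not in the boolean algebra on which $\mu$ lives), the collection $\pi(y)=\{\chi_{\varphi,\psi}:\varphi,\psi\in p\}$ is not a partial type in the sense required, and the extension-of-wide-partial-types lemma from the beginning of the section does not apply. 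The step where you ``supply a wide complete type $q$ over $M$ containing $\pi$'' therefore has no justification.

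There are two further, smaller problems. First, even granting that the integral makes sense, your Fubini computation uses $\int_y \mathbf{1}[\psi(y^{-1}x)]\,d\mu(y)=\mu(x\cdot\psi^{-1})=\mu(\psi)$, which requires invariance of $\mu$ under inversion; Definition \ref{D:acceptable} only postulates invariance under translation. (This is harmless in the ultraproduct-of-counting-measures instance used later, but the lemma is stated for abstract acceptable sets.) Second, a genuine Fubini theorem is not available for an arbitrary finitely additive Keisler measure on a boolean algebra of definable sets; the standard substitutes in this area are considerably more delicate and are precisely what the paper's argument circumvents.

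The paper's own proof sidesteps all of this by never trying to axiomatise membership in $\mathrm{st}(p)$ as a partial type: it constructs an $M$-indiscernible sequence $(a_i)$ of realisations of $p$ with each $\tp(a_i/M\cup\{a_j\}_{j<i})$ wide, sets $q=\tp(a_2^{-1}a_1/M)$, and shows directly by a finite-additivity pigeonhole (if $a_2^{-1}a_1\cdot p\cup p$ were not wide, the disjointness of the translates $a_i\cdot X$ along the indiscernible sequence would force $\sum_i\mu(a_i\cdot X)>1$) that $a_2^{-1}a_1\in\mathrm{st}(p)$; by $M$-invariance of $\mathrm{st}(p)$, all realisations of $q$ then lie in $\mathrm{st}(p)$. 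This requires only finite additivity and translation invariance, exactly what Definition \ref{D:acceptable} provides. Your Fubini heuristic captures the right intuition (most translates of $p$ should meet $p$ widely), but it cannot be turned into a proof at the level of generality of the lemma; if you want to pursue this line, you should instead produce a concrete realisation and argue as the paper does.
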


 \begin{proof}
 	Consider a sequence $(a_i)_{i\in\N}$ of realizations of $p$ such that $\tp(a_i/M\cup
 	\{a_j\}_{j<i})$  is wide. By a standard Ramsey argument, we may assume that the
 	sequence
 	is indiscernible over $M$. Set $q=\tp(a_2\inv\cdot a_1/M)$. Since $\mathrm{st}(p)$ is $M$-invariant, it suffices to show that the realization 
 	$a_2\inv\cdot a_1$ of $q$ belongs to $\mathrm{st}(p)$. Otherwise, the 
 	partial type
 	$a_2\inv \cdot a_1\cdot p(x)\cup p(x)$ is not wide and neither is $a_1\cdot p(x)\cup a_2
 	\cdot p(x)$ by translation-invariance of the measure (for $A$ is acceptable 
 	and $a_2$ is an element of	$A$). Thus,
 	we
 	can find some $M$-definable set $X(x)$
 	in $p(x)$ contained in $A(x)$ such that \[ \mu( a_j\cdot X \cap a_i\cdot X)= 0
 	\text{ for } i\neq j.\]
 	As the definable set $X$ is wide, there exists some natural number $k$ such that
 	$\frac{1}{k}< \mu(X)=\mu(a_j\cdot X)$ for all $j$ in $\N$, so
 	\[\mu\Big(\bigcup_{i=1}^k a_i \cdot X\Big) = \sum_{i=1}^k \mu(a_i \cdot X) = k \cdot
 	\mu(X)>1,\]
 	which contradicts the assumption that $\mu$ is a probability measure.
 \end{proof}

 A standard application of Ruzsa's covering argument (cf. \cite[Lemma 2.14]{TV06}) yields the following auxiliary result, which resonates with Lemma \ref{L:wn_stab}.

 \begin{lemma}\label{L:wide_fteindex}
 	Let $A$ be an acceptable subset of $G$ with respect to the measure $\mu$, which we
 	assume to be definable over a submodel $M$. Given a wide
 	$M$-definable subgroup $H\subseteq A\cdot
 	A\inv$ of $G$, there is some finite subset $C$ of $A(M)$ such that $A$ is contained in $C\cdot H$.
 \end{lemma}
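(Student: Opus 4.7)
The plan is to carry out Ruzsa's covering argument \cite[Lemma 2.14]{TV06} in a measure-theoretic rather than counting-theoretic guise, using the translation-invariance of $\mu$ on $B^3=(A\cup A\inv\cup\{\mathrm{id}_G\})^3$ in place of the classical cardinality bound, and picking witnesses inside $M$ by elementarity.

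First, I would record the following setup: since every element $c\in A$ lies in $B$ and $H\subseteq A\cdot A\inv\subseteq B\cdot B\subseteq B^3$, each coset $c\cdot H$ is contained in $A\cdot A\cdot A\inv\subseteq B^3$ and hence lies in the domain of $\mu$. Translation invariance then yields $\mu(c\cdot H)=\mu(H)$, a positive quantity because $H$ is wide.

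Next, I would construct the cover by a greedy recursion. Having chosen $c_1,\ldots,c_k\in A(M)$ lying in pairwise distinct left cosets of $H$, if $A\not\subseteq \bigcup_{i\leq k}c_i\cdot H$, the $M$-definable set $A\setminus \bigcup_{i\leq k}c_i\cdot H$ is non-empty in $G(\UU)$ and hence non-empty in $G(M)$ since $M\preceq\UU$; I take $c_{k+1}$ to be any element realized there. Because $H$ is a subgroup, the condition $c_{k+1}\notin c_i\cdot H$ forces $c_{k+1}\cdot H\cap c_i\cdot H=\emptyset$ for every $i\leq k$.

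Finally, the cosets $c_1\cdot H,\ldots, c_k\cdot H$ produced at any stage are pairwise disjoint subsets of $B^3$, each of measure $\mu(H)>0$, so $k\cdot \mu(H)\leq \mu(B^3)=1$. The recursion must therefore terminate with some $k\leq 1/\mu(H)$, yielding a finite set $C=\{c_1,\ldots,c_k\}\subseteq A(M)$ with $A\subseteq C\cdot H$. I do not anticipate a serious obstacle; the only point demanding attention is the initial verification that translates $c\cdot H$ remain inside $B^3$, which is what licenses the use of translation invariance of $\mu$.
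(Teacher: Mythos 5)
Your proposal is correct and follows essentially the same route as the paper's proof: both run Ruzsa's covering argument with the translation-invariant probability measure $\mu$ on $B^3$ replacing the counting bound, obtaining a maximal pairwise-disjoint family of cosets $c\cdot H$ with $c\in A$, bounding its size by $1/\mu(H)$, and using elementarity of $M\preceq\UU$ to place the representatives inside $A(M)$. The only cosmetic difference is that you build the finite system greedily, picking each new representative from $A(M)$ at every step, whereas the paper first takes a maximal disjoint family in $\UU$, shows it is finite, and then transfers the witnesses to $M$ at the end by elementarity.
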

 \begin{proof}
 	Note that $a\cdot H\subseteq (A\cup A\inv)^3$ is wide for every $a$ in $A$. Hence,
 	choose a maximal subset $C$ of $A$ (in $\UU$) such that $(c\cdot H)  \cap (c'\cdot H)
 	=\emptyset$
 	for every $c\neq c'$ in $C$. In particular, given any $a$ in $A$, there is some $c$ in
 	$C$
 	such that $(a\cdot H)  \cap (c\cdot H)$  is non-empty.
 	Thus, the element $a$ lies in $c\cdot H$ and so $A \subseteq C\cdot H$.

 	For each $c$ in $C$, we have that $\mu(c\cdot H)=\mu(H)>\frac{1}{k}$ for some $k$ in
 	$\N$. As in the proof of Lemma \ref{L:st_wide},  we deduce that $C$ is finite.
 	Since both $A$ and $H$ are definable over the model $M$ and $A$ is contained in
 	$C\cdot H$, we can take $C$ to be a subset of $A(M)$, as
 	desired.
 \end{proof}

\begin{prop}\label{P:stable_acceptable}
 	Let $A$ be  an acceptable subset of $G$ with respect to the measure $\mu$,
 	which we
 	assume to be definable over a submodel $M$, and assume that the formula $\varphi(x,y)=A(y\cdot
 	x)$ is stable.  Then there exists some $M$-definable subgroup $H$ of $G$ contained in $A\cdot A\inv$ such that $A\subseteq C\cdot H$ for some finite subset $C$ of $A(M)$.

 	Furthermore, if $G$ is abelian, then $H$ can be taken to be a boolean
 	combination of
 	translates of $A$.
 \end{prop}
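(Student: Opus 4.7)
The plan is to construct $H$ as the model-theoretic stabilizer of the $\varphi$-restriction of a wide complete type containing $A(x)$, and to verify its properties by combining the local-stability machinery of Section \ref{S:local} with the measure-theoretic tools from Section \ref{S:measure}. Using that $A$ is acceptable with $\mu(A)>0$ and that $\mu$ is definable over $M$, I first extend the partial type $\{A(x)\}$ to a wide complete type $p(x)$ over $M$, exploiting that the collection of measure-zero definable sets forms an ideal. Let $p_\varphi$ denote the $\varphi$-restriction of $p$, and $p_\varphi^*$ its unique global non-forking extension, which by stability is definable over $M$. Set $H := \stab_\varphi(p_\varphi)$; by classical local stability theory, $H$ is an $M$-definable subgroup of $G$, and Remark \ref{R:stab_def} further shows that in the abelian case $H$ is $\varphi$-definable over $M$, hence a boolean combination of translates of $A$.

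For the containment $H \subseteq A \cdot A\inv$, I would use the observation that $A(x) = \varphi(x, \mathrm{id}_G)$ belongs to $p_\varphi \subseteq p_\varphi^*$. Any realization $c$ of $p_\varphi^*$ in $\UU$ therefore lies in $A$. For $g \in H$, the stabilizer identity $g \cdot p_\varphi^* = p_\varphi^*$ forces $gc$ to also realize $p_\varphi^*$, hence $gc \in A$, giving $g = (gc) \cdot c\inv \in A \cdot A\inv$.

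The main obstacle is the wideness of $H$. My plan is to apply Lemma \ref{L:st_wide} to produce a wide type $r(x)$ over $M$ whose realizations lie in the $M$-invariant set $\mathrm{st}(p)$, and then to show $\mathrm{st}(p) \subseteq H$. Given $g \in \mathrm{st}(p)$, the partial type $p \cup g \cdot p$ is wide and hence extends to a wide global type $\pi$. The $\varphi$-restriction $\pi_\varphi$ is then a wide global $\varphi$-type extending both $p_\varphi$ and $g \cdot p_\varphi$. The crucial bridge is that wide formulas do not fork over $M$ (since $\mu$ is definable over $M$, as noted in Section \ref{S:measure}), so $\pi_\varphi$ is a non-forking extension of each of these two $\varphi$-types; the uniqueness of non-forking extensions in the stable setting then forces $\pi_\varphi = p_\varphi^* = g \cdot p_\varphi^*$, whence $g \in H$. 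It follows that all realizations of $r$ lie in $H$, making the $M$-definable subgroup $H$ wide. Applying Lemma \ref{L:wide_fteindex} to $H \subseteq A \cdot A\inv$ then yields a finite subset $C \subseteq A(M)$ with $A \subseteq C \cdot H$, as required.
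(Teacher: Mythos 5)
Your proposal is correct and follows essentially the same route as the paper: extend $\{A(x)\}$ to a wide complete type $p$ over $M$, set $H = \stab_\varphi(p\restr\varphi)$, observe that $H\subseteq A\cdot A\inv$ and that $H$ is $\varphi$-definable when $G$ is abelian (via Remark \ref{R:stab_def}), and prove wideness of $H$ by showing $\mathrm{st}(p)\subseteq H$ using that wide (positive-measure) types do not fork over $M$ together with uniqueness of the global non-forking extension, so that Lemma \ref{L:st_wide} followed by Lemma \ref{L:wide_fteindex} concludes. The only minor stylistic difference is that you make explicit the computation showing $H\subseteq A\cdot A\inv$, which the paper simply states as clear, and you phrase the stabilizer argument as $\pi_\varphi = p_\varphi^* = g\cdot p_\varphi^*$ whereas the paper translates by $g\inv$ first; both rest implicitly on the equivariance of $\varphi$ making Cantor-Bendixson rank translation-invariant.
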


 \begin{proof}
 Since  the  definable set $A(x)$ is wide, we may extend it to a wide complete
 $\LL$-type
 $p(x)$ over $M$.  The $\varphi$-type
 $q=p\restr\varphi$ contains the instance $\varphi(x,\mathrm{id}_G)=A(x)$, so the $M$-definable
 group $H=\stab_\varphi(q)$ is clearly contained in $A\cdot A\inv$.

In order to conclude the result by Lemma \ref{L:wide_fteindex} (together with Remark \ref{R:stab_def} when $G$ is abelian), it suffices to show that $H$ is wide.
  By Lemma
 \ref{L:st_wide}, the set \[\mathrm{st}(p)=\{g\in G(\UU): g\cdot p(x)
 \cup p(x) \text{ is wide}\} \]  contains a wide type over $M$. So  we need only show that
 $\mathrm{st}(p)\subseteq H$.  Let $g \in \mathrm{st}(p)$ and denote by $q\nf 
 \UU (x)$ in $S_\varphi(\UU)$
 	the global non-forking extension of $q=p\restr\varphi$. Since formulae of positive
 	measure do not fork over $\emptyset$, we have that the partial type $g\cdot q(x)
 	\cup q(x)$ does not fork over $M$, hence it is a restriction of $q\nf 
 	\UU (x)$. In
 	particular, the global $\varphi$-type $g\inv\cdot q\nf 
 	\UU (x)$ is a non-forking extension of
 	$q(x)$.
 	By uniqueness of the global non-forking extension, we conclude that 
 	\[g\inv\cdot q\nf  	\UU (x)= q\nf  	\UU (x),\]
 	so $g\inv$, and thus $g$, lies in $H$ as desired.
  \end{proof}

\section{Main results}\label{S:main}

We are now in a position to prove our main results. Let us begin by recalling Theorem A in the introduction for the reader's convenience.

\begin{theorem}\label{T:smalltripling}
Given  real numbers $K\geq 1$ and $\epsilon>0$ and a natural number $r\geq 
2$, 
there
exists a natural number $n=n(K, \epsilon, r)$ such that
for any (possibly infinite) group $G$ and any finite $r$-stable subset 
$A\subseteq G$ with tripling $K$, there is a
subgroup $H\subseteq A\cdot A\inv$
of $G$ with $A\subseteq C\cdot H$ for some $C\subseteq A$ of size at most $n$. Moreover, there exists $C'\subseteq C$ such that 
\[ |A \triangle (C'\cdot H)|<\epsilon |H|.\]
\end{theorem}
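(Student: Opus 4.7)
The plan is to argue by a compactness/ultraproduct reduction to the infinitary setting of Proposition~\ref{P:stable_acceptable}. Assume for contradiction that no $n = n(K,\epsilon,r)$ works; for each $n \in \N$ fix a counterexample $(G_n, A_n)$---a group with a finite $r$-stable subset of tripling at most $K$---admitting no triple $(H, C, C')$ of size $|C| \le n$ that satisfies both conclusions. Let $\mathcal{U}$ be a non-principal ultrafilter on $\N$ and set $G = \prod G_n/\mathcal{U}$, $A = \prod A_n/\mathcal{U}$, so that $A$ is an internal pseudofinite subset of $G$. By \L{}o\'s, the formula $\varphi(x,y) := A(y \cdot x)$ is $r$-stable, hence stable. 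Applying Example~\ref{E:tripling} internally to $B := A \cup A^{-1} \cup \{\mathrm{id}_G\}$ produces a translation-invariant Keisler measure $\mu$ on a boolean algebra of definable subsets of $B \cdot B \cdot B$ making $A$ acceptable with $\mu(A) \ge (14K^3)^{-1}$. After enlarging the language, we may assume $\mu$ is definable over a countable elementary submodel $M$.

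Proposition~\ref{P:stable_acceptable} then yields an $M$-definable subgroup $H \subseteq A \cdot A^{-1}$ and a finite subset $C \subseteq A(M)$ with $A \subseteq C \cdot H$. Writing $n_0 := |C|$, \L{}o\'s transfers this covering to $\mathcal{U}$-almost every $A_n$, which already contradicts the failure of the covering bound once $n > n_0$. For the symmetric-difference refinement, the decisive claim is the following coset dichotomy in the ultraproduct: for every $c \in C$,
\[ \mu(A \cap c \cdot H) \in \{0, \mu(H)\}. \]
Granting this, set $C' := \{c \in C \,:\, \mu(A \cap cH) = \mu(H)\}$; then
\[ \mu\bigl(A \triangle (C' \cdot H)\bigr) \,\le\, \sum_{c \in C \setminus C'} \mu(A \cap cH) \,+\, \sum_{c \in C'} \mu(cH \setminus A) \,=\, 0, \]
and a second appeal to \L{}o\'s produces $|A_n \triangle (C'_n \cdot H_n)| < \epsilon |H_n|$ for $\mathcal{U}$-almost every $n$, closing the contradiction.

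The dichotomy is the main technical obstacle. My plan is to exploit stability via Fact~\ref{F:HrPi}: the $\varphi$-connected component $G_\varphi^0$ is a $\varphi$-definable subgroup of finite index in $G$, and for each of its cosets $D$ and each $\varphi$-definable set $X$, exactly one of $X \cap D$, $D \setminus X$ is generic (i.e.\ covered by finitely many translates of $G$). In the pseudofinite setting, the translation invariance of $\mu$ is expected to upgrade ``generic'' to ``full $\mu$-measure in $D$'': if a $\varphi$-definable piece of $D$ were both non-null and non-full in $\mu$-measure, a Ruzsa-type covering argument in the style of Lemma~\ref{L:wide_fteindex} would produce pairwise disjoint translates inside $B \cdot B \cdot B$ of total mass exceeding $1$. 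Replacing $H$ by $H \cap G_\varphi^0$ (which preserves $H \subseteq A \cdot A^{-1}$ and inflates $|C|$ only by the bounded factor $[H : H \cap G_\varphi^0] \le [G : G_\varphi^0]$) places each coset $cH$ of the refined covering inside a single coset of $G_\varphi^0$, so that the dichotomy for the latter descends to $cH$. This upgrade from the qualitative genericity dichotomy to the quantitative measure dichotomy is precisely what the authors credit to the non-standard techniques in their remark on the improved $|H|$-bound.
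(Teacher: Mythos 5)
Your ultraproduct reduction, the invocation of Proposition~\ref{P:stable_acceptable} to produce $H \subseteq A \cdot A\inv$ with $A \subseteq C \cdot H$, and the identification of the coset dichotomy $\mu(A \cap c\cdot H) \in \{0,\mu(H)\}$ as the decisive claim are all correct and agree with the paper's strategy; granting the dichotomy, your derivation of $\mu\big(A \triangle (C'\cdot H)\big) = 0$ and the \L o\'s transfer go through.

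The gap is in your proof of the dichotomy. You apply Fact~\ref{F:HrPi} to the ambient group $G$, obtaining $G_\varphi^0$, but the dichotomy it provides is genericity \emph{in $G$}: finitely many $G$-translates cover $G$. The measure $\mu$ is only defined on definable subsets of $B \cdot B \cdot B$, whereas a coset $c\cdot G_\varphi^0$ is generally vastly larger than $c\cdot H$ (indeed $G$ need not be commensurable with $A$ at all), so genericity of $A \cap c\cdot G_\varphi^0$ or of its complement in $G$ carries no information about $\mu(A \cap c\cdot H)$. Passing to $H \cap G_\varphi^0$ does not fix this: there is no reason for $H \cap G_\varphi^0$ to be the $\varphi$-connected component of $H$, so your refined covering need not be by cosets of a $\varphi$-connected group. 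Finally, the Ruzsa-covering step you invoke cannot by itself force the dichotomy: if $0 < \mu(A \cap c\cdot H) < \mu(H)$, a maximal family of pairwise disjoint translates of $A\cap c\cdot H$ inside $B\cdot B\cdot B$ has total mass at most $1$, which is no contradiction. That counting argument only bounds $|C|$ (as in Lemma~\ref{L:wide_fteindex}); it does not rule out intermediate measures.

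The correct move, which the paper makes, is to apply Fact~\ref{F:HrPi} to the definable subgroup $H$ itself, producing $H_\varphi^0 \leq H$ of finite index in $H$, and then to replace $H$ by $H_\varphi^0$ (enlarging $C$ by a factor of $[H : H_\varphi^0]$). Now $H$ is $\varphi$-connected and lies entirely inside $A \cdot A\inv \subseteq B \cdot B \cdot B$, where $\mu$ is defined and left-invariant. The normalization $\mu/\mu(H)$ is the unique left-invariant Keisler measure on the $\varphi$-definable subsets of $H$, and by \cite[Theorem~2.3(vi)]{CPT19} wide sets are exactly the generic sets for it. For $c \in C'$, the set $c\inv\cdot A \cap H$ is wide, hence generic; by $\varphi$-connectedness of $H$ its complement $H \setminus c\inv\cdot A$ is non-generic, hence non-wide, hence $\mu$-null. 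This is precisely the dichotomy, after which your argument is fine.
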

\noindent In particular, we conclude that 
\[ |A \triangle (C'\cdot H)|<(\epsilon K^2) |A|,\] by the Pl\"unnecke-Ruzsa inequality.
\begin{remark} It follows from Proposition
\ref{P:stable_acceptable} that when $G$ is abelian, the subgroup $H$ can be taken to be a boolean
combination (whose complexity only depends on $K$, $\epsilon$ and $r$) of
translates of $A$.
\end{remark}

\begin{proof}
The proof proceeds by contradiction. Assuming that the statement does not hold, 
there are fixed $K$, $\epsilon$ and $r$ such that for each $n$ in $\N$, we 
find a finite
$r$-stable
subset $A_{n}$
of a group $G_{n}$ with tripling $K$ such that there are no subgroup 
$H\subseteq A_{n}\cdot A_{n}\inv$ and a finite subset $C$ of $A_n$ of size $n$ 
with $A_{n}$ contained in 
$C\cdot H$ and $|A_n \triangle (C'\cdot H)|< \epsilon |H|$ for 
some  $C'\subseteq C$.

Following the approach of \cite[Section 2.6]{eH12} (see also \cite[Proof 
of Theorem 1.3]{CPT19} and \cite[Section 2.3]{dP19}), we consider a suitable
expansion $\LL$ of the language of groups and regard each group $G_{n}$ as an
$\LL$-structure $M_{n}$. Choose a non-principal ultrafilter $\mathcal U$ on
$\N$
and consider the
ultraproduct $M=\prod_{\mathcal U} M_{n}$. The language $\LL$ is chosen in
such a
way that the sets $A=\prod_{\mathcal
	U}  	A_{n}$ and
	$B=\prod_{\mathcal U}  B_{n}$ are $\LL$-definable in the group
	$G=\prod_{\mathcal U} G_{n}$, where $B_{n}=A_{n}\cup
	A_{n}\inv\cup\{\mathrm{id}_G\}$. Furthermore, the
	counting measure \[ \mu_n(X_n) = \frac{|X_n|}{ |B_{n}\cdot
	B_{n}\cdot B_{n}|},\] induces a definable Keisler measure $\mu$, namely
	the standard
	part of $\lim_{\mathcal U} \mu_{n}$,
on the boolean algebra of $\LL$-definable subsets of $B\cdot B\cdot B$, such
that the set $A$
is acceptable. Now choose a sufficiently saturated elementary extension $\UU$ of the model
$M$ and note that $\mu$ is definable over $M$. Also,
every collection of subgroups of $M_n$ induces an $M$-definable group in
$G(\UU)$, and vice versa.

Note that the formula $\varphi(x,y)=A(y\cdot x)$ is $r$-stable, by \L o\'s's
theorem. Moreover, by construction the set $A$ is acceptable. Hence,
by Proposition \ref{P:stable_acceptable}, there is an
$M$-definable subgroup $H$ contained in $A\cdot A\inv$ such that $A\subseteq 
C\cdot H$, for some finite set $C$ in $A(M)$. By Fact \ref{F:HrPi}, after 
possibly increasing the size of $C$, we may assume that $H$ 
is $H_\varphi^0$. 

Let $C'$ be the collection of coset representatives $c$ in $C$ such that 
$A\cap 
(c\cdot H)$ is wide. Since $\mu$ is finitely additive and $A\subseteq C\cdot H$, 
we have that

\[ \mu(A)= \sum\limits_{c\in C} \mu\big(A \cap  (c\cdot H)\big) = 
\sum\limits_{c\in C'} 
\mu\big(A \cap  (c\cdot H)\big) = \mu\big(A \cap (C'\cdot H)\big),\]
so \[ \mu\big(A \setminus (C'\cdot H)\big) = \mu(A) - \mu\big(A \cap (C'\cdot 
H)\big) =0.\]
Thus, in order to compute $\mu\big(A \triangle (C'\cdot H)\big)$, we need only 
consider 
\[ \mu\big((C'\cdot H)  \setminus A\big)=\sum\limits_{c\in C'} \mu\big((c\cdot 
H)\setminus 
A\big) .\]
\noindent For $c$ in $C'$, note that the definable set $c\inv ((c\cdot 
H)\setminus A)$ equals $H\setminus (c\inv\cdot A)$, which is contained in 
$A\cdot A\inv$, and hence has a (real-valued) measure. Since $\mu(H)>0$, 
the measure $\mu$ 
normalised by $\mu(H)$ induces a left-invariant Keisler measure on the 
definable subsets of $H$ of the form $X\cap H$, where $X$ is 
$\varphi$-definable over $M$.  By \cite[Theorem 2.3 (vi)]{CPT19}, such a 
measure is unique and furthermore wide sets are exactly the generic sets (\cf 
Fact \ref{F:HrPi}). By construction of $H_\varphi^0$, we conclude that 
$\mu(H\setminus c\inv \cdot A)=0$ for $c$ in $C'$, so 
\[ 
\mu\big(A \triangle (C'\cdot H)\big)=   \mu\big(A \setminus (C'\cdot H)\big) + 
\mu\big((C'\cdot H)  \setminus A\big) =0 <\frac{\epsilon}{2}\cdot\mu(H).\]
However, for $n\ge |C|$ sufficiently large, we conclude by \L o\'s's theorem 
that the 
corresponding trace $A_{n}$ is contained in $C(A_n)\cdot H(G_{n})$ and 
\[\big|A_n\triangle \big(C'(A_n)\cdot H(G_n) \big)\big|<\epsilon |H(G_n)| .\] 

\noindent This yields the desired contradiction.
\end{proof}

We now turn to the proof of Theorem B in the introduction. First, we note that 
a straightforward compactness argument yields
(non-quantitative) bounds on the complexity of the representation of a weakly
normal subset as a boolean combination of suitably chosen subgroups. The fact that the subgroups in Proposition \ref{P:wn} below can be expressed as bounded boolean combinations of translates of $A$ goes beyond \cite[Theorem 1.4]{tS18}.

\begin{prop}\label{P:wn}
Given natural numbers $r$, $k$, and $l$, there are natural numbers
$n=n(r,k,l)$ , $m=m(r,k,l)$ and $t=t(r,k,l)$ such that for any abelian
group $G$ and any
subset $A\subseteq G$ with an $(r,k,l)$-weakly normal representation, the set $A$ is a
boolean combination of complexity at most $t$ of cosets of subgroups $H_1,\ldots, H_n$
of $G$. Moreover, each subgroup $H_i$ is a boolean combination of complexity at most $m$ of
translates of $A$.
\end{prop}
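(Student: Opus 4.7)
The plan is to reduce this finitary uniform statement to the infinitary result Corollary \ref{C:BC} by a standard ultraproduct compactness argument. Suppose for contradiction that the conclusion fails for some fixed triple $(r,k,l)$. Then for every triple of natural numbers $(n,m,t)$ we can exhibit an abelian group $G_{n,m,t}$ together with a subset $A_{n,m,t} \subseteq G_{n,m,t}$ admitting an $(r,k,l)$-weakly normal representation
\[ A_{n,m,t} = \bigcup_{j=1}^k B^{(n,m,t)}_j \cap \bigcup_{i=1}^l \left(G_{n,m,t} \setminus C^{(n,m,t)}_i\right),\]
yet which cannot be written as a boolean combination of complexity at most $t$ of cosets of $n$ subgroups each of which is a boolean combination of complexity at most $m$ of translates of $A_{n,m,t}$.

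Expand the language of abelian groups by fresh unary predicates interpreted as $A_{n,m,t}$, the $B^{(n,m,t)}_j$ and the $C^{(n,m,t)}_i$, and form the ultraproduct $G = \prod_{\mathcal U} G_{n,m,t}$ along a non-principal ultrafilter $\mathcal U$ on $\mathbb{N}^3$. By \L o\'s's theorem, in $G$ the corresponding formulae $B_j(x+y)$ and $C_i(x+y)$ are still $r$-weakly normal, and the set $A = \prod_{\mathcal U} A_{n,m,t}$ satisfies the same boolean identity. Consequently, the formula $\varphi(x,y) = A(x+y)$ is a boolean combination of weakly normal formulae. Passing to a sufficiently saturated elementary extension, Corollary \ref{C:BC} applies: since $A$ is the $\varphi$-definable instance $\varphi(x,0_G)$, we may write $A$ as a boolean combination of finitely many cosets of $\varphi$-definable subgroups $H_1, \dots, H_{n_0}$, each of which in turn is a boolean combination of translates of $A$, by the remark following Corollary \ref{C:BC}. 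The representation so obtained has some finite complexities $n_0, m_0, t_0$.

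The existence of such a representation can be written as a single first-order sentence in the expanded language: one existentially quantifies over the finitely many translation parameters witnessing the translates of $A$ that give each $H_i$, and over finitely many parameters encoding the boolean structure at each level, all of which have bounded size once $n_0, m_0, t_0$ are fixed. This sentence holds in $G$, so by \L o\'s's theorem it holds in $G_{n,m,t}$ for a set of indices belonging to $\mathcal U$; choosing any index $(n,m,t)$ in this set with $n \geq n_0$, $m \geq m_0$ and $t \geq t_0$ contradicts our choice of $A_{n,m,t}$.

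The main thing to verify carefully will be that \emph{everything} needed for the representation in the ultraproduct (the subgroups $H_i$, the translates witnessing them as boolean combinations of $A$, and the boolean structure realizing $A$ from the cosets) is genuinely a finite package of first-order data, so that the compactness transfer is legitimate; once the complexity bounds are fixed finite numbers this is routine existential quantification, and the crux of the argument has already been done in Corollary \ref{C:BC}.
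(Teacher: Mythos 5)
Your approach is essentially the same as the paper's: assume for contradiction that the bounds fail, expand the language by predicates for the sets in the weakly normal representations, form an ultraproduct of the counterexamples, apply Corollary~\ref{C:BC} in the ultraproduct, observe that the resulting bounded-complexity representation is expressible by a single first-order sentence, and transfer back by \L o\'s's theorem.

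There is, however, one point where your write-up does not quite close. You take an \emph{arbitrary} non-principal ultrafilter $\mathcal U$ on $\mathbb N^3$, and at the end you need to pick an index $(n,m,t)$ in the \L o\'s set with $n\geq n_0$, $m\geq m_0$ and $t\geq t_0$. But non-principality of an ultrafilter on $\mathbb N^3$ does not guarantee that the set $S_{N}=\{(n,m,t)\colon \min(n,m,t)\geq N\}$ belongs to $\mathcal U$ for every $N$: for instance, $\mathcal U$ could concentrate on $\{0\}\times\mathbb N^2$, which is infinite (hence compatible with non-principality) yet disjoint from $S_1$. In that case the \L o\'s set could avoid the indices you want, and no contradiction follows. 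The fix is routine, but you must make it: either require $\mathcal U$ to contain each $S_N$ (these sets generate a proper filter, so such an ultrafilter exists), or---as the paper does---first diagonalize by setting $G_n=G_{n,n,n}$ and $A_n=A_{n,n,n}$ and then take an ultraproduct over $\mathbb N$ with any non-principal ultrafilter, for which containment of all cofinite sets is automatic and the transfer back is immediate. Apart from this detail the argument is correct and matches the paper's.
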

\begin{proof}
Otherwise, as in the proof of Theorem \ref{T:smalltripling}, assume that, for
fixed positive integers $r$, $k$ and $l$, there are $n$, $m$ and $t$ in $\N$, and an
abelian group $G_{n,m,t}$ and a subset
$A_{n,m,t}$ such that $A_{n,m,t}$ admits  an
$(r,k,l)$-weakly normal representation, but it is not a boolean combination of
complexity at most $t$ of cosets of $n$ subgroups of $G$, each of which is
a boolean combination of complexity at most $m$ of
translates of the set $A_{n,m,t}$.

Set  $G_n=G_{n,n,n}$ and $A_n=A_{n,n,n}$. Since each $A_n$ has an
$(r,k,l)$-weakly
normal representation, there are $r$-weakly normal
subsets
$B_{n,1},\ldots,B_{n,k}$ and
$C_{n,1},\ldots,C_{n,l}$ such that
\[
A_n = \bigcup\limits_{j=1}^k B_{n,j} \cap \bigcup\limits_{i=1}^l G_n\setminus
C_{n,i} .\]

We consider an expansion $\LL'$ of $\LL$  with $k+l$
new predicates and regard each group $G_n$ as an $\LL'$-structure $M_n$, where
the predicates are interpreted as the sets $B_{n,1},\ldots,B_{n,k},
C_{n,1},\ldots,C_{n,l}$.
Choose a non-principal ultrafilter $\mathcal U$ on $\N$, and
consider the ultraproduct
$G=\prod_{\mathcal U} G_n$.  The definable set $A=\prod_{\mathcal
	U}  	A_n$ admits an $(r,k,l)$-weakly normal representation,  by \L
o\'s's theorem, thus the definable set $A$ is a boolean
combination of $r$-weakly normal formulae.

By Corollary \ref{C:BC}, the definable set $A$ is a boolean combination of
complexity at most $t_0$ of $n_0$ definable subgroups, each of which is
itself a boolean combination of complexity at most $m_0$ of translates of $A$.

\L o\'s's theorem gives the desired contradiction, by choosing $n\geq
n_0+m_0+t_0$ sufficiently large.
\end{proof}

\begin{theorem}\label{T:wn}
	Given natural numbers $r$, $k$, and $l$, there are natural numbers
	$n=n(r,k,l)$ and $m=m(r,k,l)$ such that for any abelian
	group $G$ and any
	subset $A\subseteq G$ with an $(r,k,l)$-weakly normal representation, there are
	subgroups $H_1,\ldots, H_n$
	of $G$, each contained in $A-A$, with \[A\subseteq \bigcup\limits_{i=1}^{n}
	g_i+H_i,\]
	for some $g_1,\ldots, g_n$ in $A$. Furthermore, each $H_i$ is a boolean
	combination of complexity at most $m$ of translates of $A$.

\end{theorem}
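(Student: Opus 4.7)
The plan is to mimic the ultraproduct-compactness scheme from the proof of Theorem \ref{T:smalltripling}, but with the measure-theoretic stabiliser analysis of Proposition \ref{P:stable_acceptable} replaced by the weakly normal stabiliser provided by Lemma \ref{L:wn_stab}. Assuming the conclusion fails for fixed $r,k,l$, I would for each $N \in \mathbb{N}$ produce an abelian group $G_N$ and a subset $A_N \subseteq G_N$ with an $(r,k,l)$-weakly normal representation witnessing a failure of the statement with both parameters $n$ and $m$ set to $N$. After expanding the language by unary predicates for $A_N$ and for the $r$-weakly normal sets $B_{N,j}, C_{N,i}$ appearing in its representation, form the ultraproduct $M = \prod_{\mathcal{U}} G_N$ along a non-principal ultrafilter on $\mathbb{N}$. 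By \L o\'s's theorem, $A = \prod_{\mathcal{U}} A_N$ inherits an $(r,k,l)$-weakly normal representation in $M$, so the formula $\varphi(x,y) = A(x+y)$ is a boolean combination of $r$-weakly normal formulas, placing us in the setting of Section \ref{S:wn}.

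In a sufficiently saturated elementary extension $\UU \succeq M$, Lemma \ref{L:wn_stab} attaches to every $\varphi$-type $p \in S_\varphi(M)$ containing $A(x)$ an element $g_p \in M$ such that $p$ implies the formula defining the coset $g_p + \stab_\varphi(p)$. The clopen set $[A] \subseteq S_\varphi(M)$ is compact, so the cover $[A] \subseteq \bigcup_{p \in [A]}[g_p + \stab_\varphi(p)]$ admits a finite subcover $[A] \subseteq \bigcup_{i=1}^{n_0}[g_i + H_i]$, where $H_i := \stab_\varphi(p_i)$ for types $p_1, \ldots, p_{n_0}$. Unwinding this yields $A \subseteq \bigcup_{i=1}^{n_0}(g_i + H_i)$ in $\UU$, and hence in $M$ by elementarity. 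By Remark \ref{R:stab_def} each $H_i$ is $\varphi$-definable, i.e., a boolean combination of translates of $A$, and letting $m_0$ be the maximum complexity among these finitely many boolean expressions gives a uniform bound.

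To verify $H_i \subseteq A - A$: given $h \in H_i$ and any realization $a \in \UU$ of $p_i$, the definition of the stabiliser together with equivariance of $\varphi$ shows that $h + a$ again realizes $p_i$ as a $\varphi$-type, so $h + a \in A$ because $A(x) \in p_i$, and therefore $h = (h+a) - a \in A - A$. After dropping any redundant cosets and replacing each $g_i$ by an element of $A(M) \cap (g_i + H_i)$, all of the conditions ``$g_i \in A$'', ``$H_i \subseteq A - A$'', ``$A \subseteq \bigcup_i (g_i + H_i)$'', and ``$H_i$ is a boolean combination of at most $m_0$ translates of $A$'' are first-order, so \L o\'s's theorem pulls them down to $G_N$ for a $\mathcal{U}$-large set of indices. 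Taking $N \geq \max(n_0, m_0)$ then contradicts the choice of $(G_N, A_N)$. The principal obstacle I expect is precisely this stabiliser containment $H_i \subseteq A - A$: in the small-tripling setting of Theorem \ref{T:smalltripling} the analogous inclusion depended on a Keisler measure via Proposition \ref{P:stable_acceptable}, whereas here it rests entirely on the structural content of Lemma \ref{L:wn_stab}.
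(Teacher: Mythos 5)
Your proof is correct, but it organizes the compactness argument differently from the paper. The paper first uses \L o\'s's theorem to show that no finite family of ``good'' cosets (cosets $m+H$ with $m\in A(M)$, $H\subseteq A-A$ a boolean combination of translates of $A$) covers $A$ in the ultraproduct; it then applies compactness to extract a single $\varphi$-type $p$ over $M$ containing $A(x)$ and avoiding all such cosets, and derives a contradiction by using Lemma \ref{L:wn_stab} and Remark \ref{R:stab_def} to exhibit a good coset $m+\stab_\varphi(p)$ that $p$ must nevertheless imply. You instead apply Lemma \ref{L:wn_stab} to every type in the clopen set $[A]\subseteq S_\varphi(M)$ to produce a clopen cover by cosets, extract a finite subcover by compactness of the type space, and push the resulting finite statement down to the $G_N$ via \L o\'s. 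The two arguments are dual: the paper derives the contradiction ``upstairs'' in the saturated model, you derive it ``downstairs'' in the finite structures; both are valid and of comparable length. One small imprecision in your verification that $H_i\subseteq A-A$: it is \emph{not} true that for an arbitrary realization $a\in\UU$ of $p_i$ and arbitrary $h\in H_i(\UU)$ the element $h+a$ realizes $p_i$ — if $h\notin M$ then $\varphi(a,h+b)$ need not be determined by $p_i$ alone. You should take $a$ realizing the non-forking extension of $p_i$ to $M\cup\{h\}$ (in a larger model), so that $\varphi(a,h+b)\leftrightarrow(d_{p_i}\varphi)(h+b)\leftrightarrow(d_{p_i}\varphi)(b)\leftrightarrow\varphi(a,b)$ for all relevant $b$; then $a,h+a\in A$ and $h\in A-A$ follows, and since $A-A$ is $M$-definable the containment holds in $\UU$. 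This is essentially what the paper means when it asserts the containment ``by construction,'' and the same reasoning already underlies Proposition \ref{P:stable_acceptable} — contrary to your closing remark, that containment there also does not depend on the Keisler measure; the measure is used only to establish that the stabilizer is wide, which feeds the Ruzsa covering argument rather than the containment $H\subseteq A\cdot A\inv$.
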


\begin{proof}
As in the proof of Proposition \ref{P:wn}, if the statement does not hold,
there are
fixed  integers $r$, $k$ and $l$ such that for each $n$ and $m$ in $\N$, we
find an abelian group
$G_{n,m}$ and a subset
	$A_{n,m}$ such that $A_{n,m}$ admits  an
	$(r,k,l)$-weakly normal representation, yet \[ A_{n,m}\not\subseteq
	\bigcup\limits_{i=1}^{n}
	g_i+H_i,\]
	for any $g_1,\ldots, g_n$ in $A_{n,m}$, for all subgroups $H_1,\ldots, H_n$
	of $G_{n,m}$, each contained in $A_{n,m}-A_{n,m}$, which are boolean
	combinations of complexity at most $m$ of translates of $A_{n,m}$.

	Set $G_n=G_{n,n}$ and $A_n=A_{n,n}$. Choose a non-principal ultrafilter
	$\mathcal U$ on $\N$, and
	consider the ultraproduct
	$G=\prod_{\mathcal U} G_n$.  Choose a sufficiently saturated elementary
	extension
	$\UU$ of the
	model
	$M=\prod_{\mathcal U} M_n$.  As observed before,  the definable set
	$A=\prod_{\mathcal
		U}  	A_n$ admits an $(r,k,l)$-weakly normal representation,  by \L
	o\'s's theorem, so the formula $\varphi(x,y)=A(x+y)$ is a boolean
	combination of $r$-weakly normal formulae.

	Let $\mathcal F$ be the family of cosets $m+H$, where $m$ belongs to 
	$A(M)$ and the definable subgroup  $H\subseteq
	A-A$ over $M$ is given by a finite boolean combination of
	translates of $A$. By \L o\'s's theorem, no finite collection of $\mathcal 
	F$
	covers $A$: otherwise, considering the traces of these subgroups in the
	corresponding $G_{n,n}$ for sufficiently large $n$, we would have that
	$A_{n,n}$ is covered by a finite union of translates of these subgroups,
	which have bounded complexity as boolean combinations of translates of
	$A_{n,n}$.

By compactness, we obtain a complete $\varphi$-type $p(x)$ over $M$
	containing the formula $A(x)=\varphi(x,\mathrm{id}_G)$ such that the 
	partial type \[ p(x)\cup\{\neg(m+H)(x)\}_{m+H\in \mathcal F}, \]
	 is
	consistent. By Remark
	\ref{R:stab_def}, the
	subgroup $\stab_\varphi(p)$ is $\varphi$-definable, so it is a boolean
	combination of translates of $A$.  Clearly
	$\stab_\varphi(p)
	\subseteq A-A$, by construction. Lemma \ref{L:wn_stab} yields that the type $p$
	contains the definable set
	$\big(A\cap(m+\stab_\varphi(p))\big)(x)$, for some $m$ in $M$. In
	particular, the $M$-definable set $\big(A\cap(m+\stab_\varphi(p))\big)(x)$
	is non-empty, so there is a realisation in $M$, since $M$ is an elementary
	substructure. Replacing the element $m$, we may assume that it lies in
	$A(M)$. Hence, the coset $m+\stab_\varphi(p)$ belongs to the family
	$\mathcal F$, which contradicts our choice of $p$ and hence implies the result.
\end{proof}

As an immediate consequence we obtain the result for finite sets stated as Theorem B in the introduction.
An analogous result holds  whenever $G$ carries a finitely additive probability
measure $\mu$ on the boolean algebra of translates of $A$ with $\mu(A)>0$.

\end{document}